\newtheorem{thm}{Theorem}[section]
\newtheorem{prop}[thm]{Proposition}
\newtheorem{lemma}[thm]{Lemma}
\newtheorem{defn}[thm]{Definition}
\newtheorem{preremark}[thm]{Remark}
\numberwithin{equation}{section}
\newcommand{\norm}[1]{\left\Vert#1\right\Vert}
\newcommand{\R}{\mathbb R}
\newcommand{\grad} {\nabla}
\DeclareMathOperator{\supp}{supp}
\def\XXint#1#2#3{{\setbox0=\hbox{$#1{#2#3}{\int}$}
       \vcenter{\hbox{$#2#3$}}\kern-.5\wd0}}
\newcommand{\meanbar}[1]{%
\setbox0 = \hbox{$#1 \int$}
\hbox to 0pt{%
\thinspace
\hskip 0.1\wd0
\raise 0.5\ht0
\hbox{%
\lower 0.5\dp0
\hbox{\rule{0.8\wd0}{2\linethickness}}
}%
\hss
}%
}
    \newcounter{myfootertablecounter}
\begin{document}

\title{Optimality  in nonlocal time-dependent obstacle problems}

\author{Ioannis Athanasopoulos, Luis Caffarelli, Emmanouil Milakis}
\date{}
\maketitle

\begin{abstract}
This paper showcases the effectiveness of the quasiconvexity property in addressing the optimal regularity of the temporal derivative and establishes conditions for its continuity in nonlocal time-dependent obstacle problems.

\end{abstract}

AMS Subject Classifications: 35R35, 35R09, 45K05.

\textbf{Keywords}: Obstacle Problem, Parabolic Free Boundary Problems, Non-local operators.

\section{Introduction}\label{sec:intro}

Parabolic obstacle problems can be classified into two main categories, depending on whether the obstacle is time-dependent. This distinction affects both the regularity of the time derivative and the smoothness of the free boundary.

In the time-dependent setting, the coincidence set—that is, the region where the solution coincides with the obstacle—may expand or contract over time. In contrast, in the time-independent case, the coincidence set shrinks monotonically. This monotonicity implies that the time derivative is non-negative and, consequently, continuous. However, in the time-dependent case the time derivative may develop discontinuities, particularly at points where the solution first contacts the obstacle.

Furthermore, since the free boundary evolves in both directions over time, its regularity analysis is, in some respects, more challenging.

A considerable body of literature has been devoted to time-independent obstacle problems. The optimal regularity of the solution, as well as the smoothness of the free boundary in the classical case (also known as the classical one-phase Stefan problem), was established in \cite{C} (see also \cite{CF79}). For the space-nonlocal parabolic operators-i.e. parabolic involving the fractional Laplacian-an analysis of the optimal regularity of the solution and the free boundary was presented in \cite{CF13}. The optimal regularity and the free boundary were obtained in \cite{BFR} for $s>\frac{1}{2}$, where $s$ denotes the order of the fractional Laplacian. The supercritical case $s\in(0,\frac{1}{2})$ along with a generalization to space-nonlocal parabolic operators was addressed in \cite{RT}. Finally, the critical case $s=\frac{1}{2}$ was treated more recently in \cite{FRS} as part of a unified approach.

In this work, we investigate time-dependent obstacle problems with particular emphasis on issues related to temporal regularity. As previously noted, the time derivative of the solution may, in general, exhibit discontinuities. Nevertheless, we observe that the solution detaches from the obstacle in a regular manner; specifically, the positive part of the time derivative remains continuous. In \cite{ACM1}, we established that this positive part of the time derivative is indeed continuous in both the classical "thick" and "thin" obstacle problems (see §§3.1 and 3.2 of \cite{ACM1}, respectively). 

In Section~\S\ref{Optd}, we extend the result of §3.1 in~\cite{ACM1} to the setting of space-nonlocal parabolic operators, which includes, for instance, the fractional Laplacian (cf. the fifth prototype on p.~5 of~\cite{ACM1}). The extension of the result from §3.2 of~\cite{ACM1}, which addresses operators that are nonlocal in both time and space—such as fractional heat operators—will be the subject of future work.
 
In order to proceed with the regularity analysis of the free boundary, it is necessary for the full time derivative to be continuous. In Section §4, for operators similar to those considered in the Section §3, we prove that at points where parabolic density from the past of the coincidence set is positive, the full time derivative is in fact H\"{o}lder continuous. This result is new even in the classical Thick Obstacle Problem.

In the case of nonlocal operators such as the fractional heat operator, a similar result was established in~\cite{ACM2} (see also Section~\S4.2 of~\cite{ACM1} for the classical Thin Obstacle Problem). There, the approach relied on extending the operator to an additional dimension. The case of operators that do not admit such an extension will be addressed in forthcoming work.

\section{Preliminaries. Statement of the problem.}\label{psp}

Let $\Omega\subset\R^n$ be a domain with smooth boundary and assume that $\psi:\R^{n}\times [0,\infty)\rightarrow \R$ be a smooth function with $\psi(x,t)<0$ for $(x,t)\in\Omega^c\times[0,T]$ and $\max\psi(x,0)>0$. 
Find a function $u$ lying above $\psi$, the so called obstacle,  such that
\begin{equation}\label{statem}
\begin{cases}  
\partial_t u-\mathcal{L}u\geq0, \ \ u-\psi\geq 0   &{\rm{in}} \ \  \Omega \times (0,T] \cr
(u-\psi)(\partial_t u-\mathcal{L}u)= 0 &{\rm{in}} \ \  \Omega\times (0,T] \cr
u(x,t)=0 &{\rm{in}} \ \  \Omega^c\times [0,T] \cr 
u(x,0)=\phi(x) &{\rm{on}} \ \  \Omega\times\{0\}. \cr
\end{cases}
\end{equation}
where
\begin{equation}\label{Operator}
\mathcal{L}u(x,t):=\int\limits_{\R^n}(u(y,t)-u(x,t))K(x,y)dy
\end{equation}
with $K$ symmetric in $x$ and $y$ i.e. $K(x,y)=K(y,x)$ for any $x\neq y$ satisfying
\begin{equation}\label{Kbounds}
\frac{\mathbf{1}_{\{|x-y|\leq 2\}}}{\Lambda}\frac{1}{|x-y|^{n+\alpha}}\leq K(x,y)\leq \frac{\Lambda}{|x-y|^{n+\alpha}}
\end{equation}
for some $\Lambda>0$ and $\alpha\in(0,2)$, and $\phi(x)\geq\psi(x,0)$ for $x\in\Omega$. The fractional Sobolev space of order $\alpha/2$ is defined by 
$$H^{\alpha/2}(\Omega)=\bigg\{v\in L^2(\Omega):\frac{|v(x)-v(y)|}{|x-y|^{(\alpha+n)/2}}\in L^2(\Omega\times\Omega)\bigg\}$$
endowed with the norm 
$$||v||^2_{H^{\alpha/2}(\Omega)}=||v||^2_{L^{2}(\Omega)}+(2-\alpha)\iint_{\Omega\times\Omega}\frac{|v(x)-v(y)|^2}{|x-y|^{\alpha+n}}dxdy.$$
The completion of $C^\infty_c(\Omega)$ with $||\cdot||^2_{H^{\alpha/2}(\Omega)}$ is denoted by $H_0^{\alpha/2}(\Omega)$ and $H^{-\alpha/2}$ is the dual of $H_0^{\alpha/2}$.

Weak solutions of \eqref{statem} are obtained as limits of a suitable approximation
procedure. One such procedure is the penalization: one obtains the solution $u$ as a limit of $u^\varepsilon$ as $\varepsilon\rightarrow 0$, where $u^\varepsilon$ solves the problem
\begin{equation}\label{penprob} 
\begin{cases}
\mathcal{L}u^\varepsilon-\partial_t u^\varepsilon=\beta_\varepsilon (u^\varepsilon-\psi^\varepsilon)  &{\rm{in}} \ \  \Omega \times (0,T] \cr
u^\varepsilon(x,t)=0 &{\rm{in}} \ \  \Omega^c\times[0,T] \cr
u^\varepsilon(x,0)=\phi^\varepsilon(x)+\varepsilon &{\rm{on}} \ \ \Omega\times\{0\} 
\end{cases}
\end{equation}
where, for $\varepsilon > 0$, the functions $\phi^\varepsilon$ and $\psi^\varepsilon$ are smooth, $$\beta_\varepsilon(s)=-Ne^{s-\varepsilon}\mathbf{1}_{\{s\leq\varepsilon\}}$$ with $N$ chosen large enough and $$\psi^\varepsilon\rightarrow\psi, \ \ \ \ \ \phi^\varepsilon\rightarrow\phi \ \ \ \ \ \text{uniformly as}\ \ \varepsilon\rightarrow 0.$$

Working with the penalized problem \eqref{penprob}, one can obtain well-posedness—namely existence, uniqueness, and stability under convergence—as in Section 5 of \cite{FKV} (see also \cite{FK} and \cite{ACM1}). Moreover, a priori estimates for the derivatives, i.e. $u^\varepsilon_x$, $u^\varepsilon_t$, and $(u^\varepsilon_{\zeta\zeta})^-$ with $\zeta=(x,t)$, that are independent of $\varepsilon$ follow from the maximum and comparison principles, see \cite{ACM1} in particular for the estimate on $(u^\varepsilon_{\zeta\zeta})^-$ as well as Theorem 3.2 in \cite{ACM2}. Passing to the limit then yields the same bounds for the solution $u$. 

\section{Optimality of time derivative}\label{Optd}
In this section, we demonstrate how quasiconvexity in time enhances the time regularity of the solution, without imposing any assumptions on the behavior of its free boundary. More precisely, we show that the positive time derivative of the solution is always continuous. As mentioned in the Introduction~§\ref{sec:intro}, this result generalizes the findings of §3.1 in~\cite{ACM1} to the framework of space-nonlocal parabolic operators.

Our approach is as follows: we penalize the problem \eqref{statem}, subtract the obstacle from the solution, differentiate the resulting expression with respect to time, and work with the derived equation. We then obtain a global uniform modulus of continuity for the positive part of the time derivative, independent of the $\varepsilon$-penalization. Passing to the limit, this yields the desired continuity result.

The derived problem takes the form:
\begin{equation}\label{derprob} 
\begin{cases}
\mathcal{L}v^\varepsilon -\partial_tv^\varepsilon
=\beta'_\varepsilon(u^\varepsilon
-\psi^\varepsilon)v^\varepsilon+f_t 
& {\rm{in}} \ Q:=\Omega\times (0,T]\\
v^\varepsilon=(-\psi^\varepsilon)_t & {\rm{on}} \ \Omega^c\times (0,T]\\
v^\varepsilon=\mathcal{L}(\phi^\varepsilon-\psi^\varepsilon) & {\rm{on}} \ \  \Omega\times \{0\}.
\end{cases}
\end{equation}
where $v^\varepsilon=\partial_t(u^\varepsilon-\psi^\varepsilon)$ and  $f=-(\mathcal{L} \psi^\varepsilon-\partial_t\psi^\varepsilon)$ and $\mathcal{L}$ is defined in (\ref{Operator}) and (\ref{Kbounds}).

Our method, which uses the approach of \cite{ACM1}, is essentially that of DeGiorgi's, first appeared in his 
celebrated work \cite{DG}. To simplify matters we start with a normalized situation i.e. we assume that the maximum of our solution is one
in the unit parabolic cylinder. We will prove (Proposition \ref{prop1}) that if at the top 
center $v^\varepsilon$ is zero then in a concentric subcylinder into the future $v^\varepsilon$ decreases. 
Then we rescale and repeat. But before that we need several lemmata. Our first lemma asserts that if $v^\varepsilon$ 
is "most of the time" very near to its positive maximum in some cylinder, then in a smaller cylinder into the future $v^\varepsilon$ is strictly positive.   

\begin{lemma}\label{lemma3.1}
Let $Q_1:=B_1\times (-1,0]$ with $B_1:=\{x\in \R^n:|x|\leq 1\}$. 
Suppose that $0<\underset{Q_1}\max~v^\varepsilon\leq 1$ where $v^\varepsilon$ is a solution to~\eqref{derprob}, then there 
exists a constant $\sigma>0$, independent of $\varepsilon$, such that 
\begin{equation}\label{3.2}
\fint\limits_{Q_1}(1-v^\varepsilon)^+dx<\sigma
\end{equation}
implies that $v^\varepsilon\geq 1/2$ in $Q_{1/2}$.
\end{lemma}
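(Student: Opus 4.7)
The plan is a standard De~Giorgi $L^2$-to-$L^\infty$ iteration applied to the excess $w_k := (l_k - v^\varepsilon)^+$ on a shrinking sequence of cylinders $Q_k := B_{r_k}\times(-r_k^\alpha,0]$, with levels $l_k := \tfrac12 + 2^{-(k+1)}$ and radii $r_k := \tfrac12 + 2^{-(k+1)}$, together with cutoffs $\eta_k$ equal to $1$ on $Q_{k+1}$ and supported in $Q_k$. Writing $U_k := \fint_{Q_k} w_k^2\,dx\,dt$, the goal is a recursion $U_{k+1}\leq C\mu^k U_k^{1+\kappa}$ with $\kappa,\mu>0$ depending only on $n,\alpha,\Lambda$ and not on $\varepsilon$; a classical iteration lemma then forces $U_k\to 0$ as soon as $U_0<\sigma$ is sufficiently small, whence $(\tfrac12 - v^\varepsilon)^+\equiv 0$ on $Q_{1/2}$ as claimed.

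The core step is an $\varepsilon$-independent Caccioppoli inequality. Test \eqref{derprob} against $-w_k\eta_k^2$. The time term produces $\tfrac12\tfrac{d}{dt}\int w_k^2\eta_k^2\,dx$ plus a harmless contribution from $\partial_t\eta_k$, and the nonlocal term is handled by a pointwise inequality of Caffarelli--Chan--Vasseur type that, after integrating against $K$, converts $\int \mathcal{L}v^\varepsilon\,w_k\eta_k^2\,dx$ into $(c/2)[w_k\eta_k]_{H^{\alpha/2}}^2$ minus a localization tail controlled by $\|w_k\|_{L^2(Q_k)}^2$ through the oscillation of $\eta_k$ and the upper bound in \eqref{Kbounds}. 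On the right-hand side of \eqref{derprob}, the source $f_t$ is uniformly bounded by the smoothness of $\psi$, and the zeroth-order term $\beta'_\varepsilon(u^\varepsilon-\psi^\varepsilon)v^\varepsilon$ is uniformly bounded as well, because $|\beta'_\varepsilon|\leq N$ follows directly from the explicit formula $\beta_\varepsilon(s) = -Ne^{s-\varepsilon}\mathbf{1}_{\{s\leq\varepsilon\}}$ and $v^\varepsilon$ is a priori in $L^\infty(Q_1)$ from Section~\ref{psp}; these contributions are therefore absorbed through Young's inequality at the cost of a measure-of-support term $|\{w_k>0\}|$.

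Combining the Caccioppoli inequality with the parabolic fractional Sobolev embedding
\[
L_t^\infty L_x^2 \cap L_t^2 H_x^{\alpha/2} \hookrightarrow L_{x,t}^{2(1+\kappa)},\qquad \kappa=\kappa(n,\alpha)>0,
\]
then invoking Tchebychev at the level jump (on $\{w_{k+1}>0\}$ one has $w_k\geq l_k - l_{k+1} = 2^{-(k+2)}$, so $|\{w_{k+1}>0\}|\leq C\,4^{k+2}\int_{Q_k} w_k^2$), and passing from $w_{k+1}$ on $Q_{k+1}$ to $w_k\eta_k$ on $Q_k$, one arrives at the target nonlinear recursion. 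The iteration lemma of Ladyzhenskaya--Ural'tseva type then concludes.

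The principal obstacle is the uniformity in $\varepsilon$ of the Caccioppoli inequality: the penalization term $\beta'_\varepsilon(u^\varepsilon-\psi^\varepsilon)v^\varepsilon$ could a priori blow up as $\varepsilon\to 0$, but the specific form of $\beta_\varepsilon$ guarantees $|\beta'_\varepsilon|\leq N$, so it behaves as a bounded potential rather than a singular one. A secondary obstacle is control of the nonlocal tail generated by testing $\mathcal{L}$ against a compactly supported cutoff; this is absorbed using the global bound $\max_{Q_1} v^\varepsilon\leq 1$ in the hypothesis together with the $L^\infty$-estimates on $v^\varepsilon$ recorded in Section~\ref{psp}.
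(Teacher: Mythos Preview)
Your proposal follows the same De Giorgi $L^2$--to--$L^\infty$ iteration as the paper: the levels, shrinking cylinders, Caccioppoli step, fractional Sobolev embedding, Chebyshev estimate, and nonlinear recursion are all identical in structure to what the paper carries out, and your treatment of the nonlocal Dirichlet form and its tail is just a compressed version of the paper's explicit computation in Part~I of the proof.

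The one substantive difference is how you dispose of the penalization term $\beta'_\varepsilon(u^\varepsilon-\psi^\varepsilon)v^\varepsilon$ on the right-hand side. You bound it pointwise via $|\beta'_\varepsilon|\le N$ and treat it as a bounded zeroth-order potential. The paper instead rewrites it as $\partial_t\bigl(\beta_\varepsilon(u^\varepsilon-\psi^\varepsilon)\bigr)$, integrates by parts in $t$, and controls the resulting term using $\|\beta_\varepsilon\|_\infty$ together with the time-quasiconvexity bound $\|((u-\psi)_{tt})^-\|_\infty<\infty$ (this is what the paper calls ``the crucial estimate for temporal quasilinearity'' in Part~II of the proof, and the resulting constant in the energy inequality~\eqref{eq5} explicitly depends on $\|(u_{tt})^-\|_\infty$). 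Your shortcut is legitimate for the specific $\beta_\varepsilon$ written in Section~\ref{psp}, and it yields a slightly leaner argument that does not invoke second time derivatives. The paper's route, on the other hand, is robust to penalizations for which $\beta'_\varepsilon$ is not uniformly bounded (the typical situation when the penalty is rescaled in $\varepsilon$), and it is deliberately chosen to exhibit the role of quasiconvexity, which is the announced point of Section~\ref{Optd}. One caveat on your side: the formula $\beta_\varepsilon(s)=-Ne^{s-\varepsilon}\mathbf{1}_{\{s\le\varepsilon\}}$ is discontinuous at $s=\varepsilon$, so strictly speaking $\beta'_\varepsilon$ carries a singular part there; any smoothing of the jump over a scale comparable to $\varepsilon$ would make $|\beta'_\varepsilon|$ blow up, and then your direct bound fails while the paper's integration-by-parts argument still goes through.
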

\begin{proof} 
For simplicity, we drop the parameter $\varepsilon$.
We begin by deriving an \textit{energy inequality} suited to our needs. To this end, we set $w=1-v$, so that the equation becomes 
$$\mathcal{L} w-\partial_tw=\beta'(u-\psi)(w-1)+f_t.$$
Choose a smooth cutoff function $\zeta$ vanishing near the parabolic boundary of $Q_1$ and $k\geq 0$. Multiply 
the above equation by $\zeta^2(w-k)^+$ and integrate by parts to obtain
$$-\tfrac{1}{2} \int\limits_{-1}^0 D(w,\zeta^2(w-k)^+)dt-\int\limits_{Q_1}(\partial_t w(x,t))(\zeta^2(w-k)^+)dxdt\ \ \ \ \ \ \ \ \ \ \ \ \ \ \ \ \ \ \ \ \ \ \ \ \ \ \ \ \ \ \ \ \ \ \ \ \ \ \ \ \ \ \ \ \ \ \ \ \ \ $$
\begin{equation}\label{Maineq}
\ \ \ \ \ \ \ \ \ \ \ \ \ \ \ \ \ \ \ \ \ \ \ \ \ \ \ \ \ \ \ \ \ \ \ \ \ \ \ \ \ \ \ \ \ \ \ =\int\limits_{Q_1}(\partial_t\beta(u-\psi))(\zeta^2(w-k)^+)dxdt+\int\limits_{Q_1}f_t\zeta^2(w-k)^+dxdt
\end{equation}
where
$$D(u,v):=\iint\limits_{\R^{2n}}(u(x,t)-u(y,t))K(x,y,t)((v(x,t)-v(y,t))dydx,$$
or
$$\tfrac{1}{2} \int\limits_{-1}^0 D(w,\zeta^2(w-k)^+)dt+\tfrac{1}{2}\int\limits_{Q_1}\partial_t[(\zeta(w-k)^+)^2]dxdt\ \ \ \ \ \ \ \ \ \ \ \ \ \ \ \ \ \ \ \ \ \ \ \ \ \ \ \ \ \ \ \ \ \ \ \ \ \ \ \ \ \ \ \ \ \ \ \ \ \ \ \ \ \ \ \ \ \ \ \ $$
\begin{equation}\label{Mmaineq}
\ \ \ \ \ \ \ \ \ \ \ \ \ \ \ \ \ \ \ \ \ \ \ \ =-\int\limits_{Q_1}(\partial_t\beta(u-\psi))(\zeta^2(w-k)^+)dxdt-\int\limits_{Q_1}f_t\zeta^2(w-k)^+dxdt
\end{equation}
Integrating in $t$ the second term on the left and by parts the first term on the right of the above equation we obtain
$$\tfrac{1}{2} \int\limits_{-1}^0 D(w,\zeta^2(w-k)^+)dt+\tfrac{1}{2}\int\limits_{B_1}(\zeta(w-k)^+)^2(x,0)dxdt=-\int\limits_{Q_1}\beta(u-\psi)\partial_t(\zeta^2(w-k)^+)dxdt$$
\begin{equation}\label{maineq}
\ \ \ \ \ \ \ \ \ \ \ \ \ \ \ \ \ \ \ \ \ \ \ \ +\int\limits_{B_1}\beta(u-\psi)\zeta^2(w-k)^+(x,0)dx+\int\limits_{Q_1}((w-k)^+)^2\zeta\partial_t\zeta dxdt-\int\limits_{Q_1}f_t\zeta^2(w-k)^+dxdt
\end{equation}\\
We must estimate each term of the above equation (\ref{maineq}).
\\
\underline{I. Estimate for the integrand of the first term, i.e., $D(w,\zeta^2(w-k)^+)$ in equation (\ref{maineq})}
\\
Observe that
\begin{equation}\label{eq1}D(w,\zeta^2(w-k)^+)=D((w-k)^+,\zeta^2(w-k)^+)+D(-(w-k)^-,\zeta^2(w-k)^+).
\end{equation}
Using the the identity $a^2+b^2=(a-b)^2+2ab$, the first term of (\ref{eq1}) can be written as follows: 
$$D((w-k)^+,\zeta^2(w-k)^+)=D(\zeta(w-k)^+,\zeta(w-k)^+)\ \ \ \ \ \ \ \ \ \ \ \ \ \ \ \ \ \ \ \ \ \ \ \ \ \ \ \ \ \ \ \ \ \ \ \ \ \ \ \ \ \ \ \ \ \ \ \ \ \ \ \ \ \ \ \ \ \ \ \ \ \ \ \ \ \ \ \ \ \ \ \ \ \ \ \ \ \ \ \ \ \ \ \ \ $$ 
\begin{equation}\label{eq2}
\ \ \ \ \ \ \ \ \ \ \ \ \ \ \ \ \ \ \ \ \ \ \ \ \ \ \ \ \ \ \ \ \ \ \ -\iint\limits_{\R^{2n}}(\zeta(x,t)-\zeta(y,t))^2(w-k)^+(x,t)(w-k)^+(y,t)K(x,y,t)dydx. 
\end{equation}
We estimate the first term of equation (\ref{eq2}) from below,
$$D(\zeta(w-k)^+,\zeta(w-k)^+):=\iint\limits_{\R^{2n}}[(\zeta(w-k)^+ )(x,t)-(\zeta(w-k)^+ )(y,t)]^2K(x,y,t)dydx\ \ \ \ \ \ \ \ \ \ \ \ \ \ \ \ \ \ \ \ \ $$
$$ \ \ \ \ \ \ \ \ \ \\ \ \ \ \ \ \ \ \ \ \ \ \geq\frac{1}{\Lambda}\iint\limits_{\R^{2n}}\frac{\big[(\zeta(w-k)^+ )(x,t)
-(\zeta(w-k)^+ )(y,t)\big]^2}{|x-y|^{n+\alpha}}
\mathbf{1}_{\{|x-y|\leq \frac{1}{2}\}}dydx$$
$$\ \ \ \ \ =\frac{1}{\Lambda}\iint\limits_{\R^{2n}}\frac{[(\zeta(w-k)^+ )(x,t)
-(\zeta(w-k)^+ )(y,t)]^2}{|x-y|^{n+\alpha}}dydx$$
$$\ \ \ \ \ \ \ \ \ \ \ \ \ \ \ \ \ \ \ \ \ \ \ \ \ \ \ \ \ \ \ \ \ \ \ \ \ \ \ \      \ \ \ \ \ \ \ \ \ \ -\frac{1}{\Lambda}\iint\limits_{|x-y|>\frac{1}{2}}\frac{[(\zeta(w-k)^+ )(x,t)
-(\zeta(w-k)^+ )(y,t)]^2}{|x-y|^{n+\alpha}}dydx$$  
using the inequality $(a-b)^2\leq 2(a^2+b^2)$ in the second term, we have
$$\ \ \ \ \ \ \geq\frac{1}{\Lambda}\iint\limits_{\R^{2n}}\frac{[(\zeta(w-k)^+ )(x,t)-(\zeta(w-k)^+ )(y,t)]^2}{|x-y|^{n+\alpha}}dydx$$  
$$\ \ \ \ \ \ \ \ \ \ \ \ \ \ \ \ \ \ \ \ \ \ \ \ \ \ \ \ \ \ \ \ \ \ \ \ \ \ \ \      \ \ \ \ \ \ \ \ \ \ 
-\frac{2}{\Lambda}\iint\limits_{|x-y|>\frac{1}{2}}\frac{[(\zeta(w-k)^+ )(x,t)]^2+[(\zeta(w-k)^+ )(y,t)]^2}{|x-y|^{n+\alpha}}dydx$$
by definition of fractional Sobolev spaces
$$\ \ \ \ \ \ \ \ \ \ \ \ \ \ \ \ \ \ \ \ \ \ \ \ \ \ \ \ \ \ \ \geq\frac{1}{\Lambda}\norm{\zeta(w-k)^+}^2_{H^\frac{\alpha}{2}}-\frac{4}{\Lambda}\int_{\R^n}(\zeta(w-k)^+)^2(x,t)\bigg(\int\limits_{|y-x|>\frac{1}{2}}\frac{1}{|x-y|^{n+\alpha}}dy\bigg)dx$$
$$\ \ \ \ \ \ \ =\frac{1}{\Lambda}\norm{\zeta(w-k)^+}^2_{H^\frac{\alpha}{2}}-\frac{2^{\alpha+2}\omega_n}{\Lambda\alpha }\int_{\R^n}(\zeta(w-k)^+)^2(x,t)dx$$
On the other hand, we estimate the second term of (\ref{eq2}) from above
$$\iint\limits_{\R^{2n}}(\zeta(x,t)-\zeta(y,t))^2(w-k)^+(x,t)(w-k)^+(y,t)K(x,y,t)dydx\ \ \ \ \ \ \ \ \ \ \ \ \ \ \ \ \ \ \ \ \ \ \ \ \ \ \ \ \ \ \ \ \ \ \ \ \ \ $$
$$\leq 2\int\limits_{B_1}(w-k)^+(x,t)\int\limits_{\R^n}(\zeta(x,t)-\zeta(y,t))^2K(x,y,t)dydx\ \ \ \ \ \ \ \ \ \ \ \ \ \ \ \ \ $$
$$\ \ \ \ \ \ \ \ \ \ \ \ \ \ \ \ \ \ \leq 2\Lambda\int\limits_{B_1}(w-k)^+(x,t)\bigg(\int\limits_{|y-x|>\frac{1}{2}}\frac{2}{|x-y|^{n+\alpha}}dy+\int\limits_{|x-y|\leq\frac{1}{2}}
\frac{|\grad\zeta(x+s_0(y-x)|^2}{|x-y|^{n+\alpha}}dy\bigg)dx $$ 
$$\ \ \ \ \ \ \ \ \ \ \ \ \ \ \ \ \ \ \ \ \ \ \ \ \ \ \ \ \ \ \ \ \ \ \ \ \ \ \ \ \ \ \ \ \ \ \ \ \ \ \ \ \ \ \ \ \ \ \ \ \ \ \ \ \ \ \ \ \ \ \ \ \ \ \ \ \ \ \ \ \ \ \ \ \  \ \ \ \ \ \ \ \ \ \ \ \ \ \ \ \ \ \ \ \ \ \text{for} \ \ s_0\in(0,1)$$
$$\leq C_\alpha\Lambda\int\limits_{B_1}(w-k)^+(x,t)dx.\ \ \ \ \ \ \ \ \ \ \ \ \ \ \ \ \ \ \ \ \ \ \ \ \ \ \ \ \ \ \ \ \ \ \ \ \ \ \ \ \ \ \ \ \ \ \ \ \ \ \ \ \ $$
Finally, the second term of (\ref{eq1}), 
$$D(-(u-k)^-,\zeta^2(u-k)^+)=\hspace{25em}$$
$$ =\iint\limits_{\R^{2n}}((w-k)^-(x,t)(\zeta^2(w-k)^+)(y,t)+(u-k)^-(y,t)(\zeta^2(u-k)^+)(x,t))K(x,y,t)dydx$$
\begin{equation}\label{eq3}
-\iint\limits_{\R^{2n}}(((w-k)^-\zeta^2(w-k)^+)(x,t) +((w-k)^-\zeta^2(w-k)^+)(y,t))K(x,y,t)dydx;
\end{equation}
since the second term in (\ref{eq3}) is zero and using the symmetry of the kernel we have
\begin{equation}\label{eq4} 
D(-(w-k)^-,\zeta^2(w-k)^+)=2\iint\limits_{\R^{2n}}((w-k)^-(x,t)(\zeta^2(w-k)^+)(y,t)K(x,y,t)dydx.
\end{equation}
\underline{II. Estimate for the first term on the right-hand side of (\ref{maineq})}
\\
The crucial estimate for temporal quasilinearity is applied in this first term; consequently, we obtain
$$-\int\limits_{Q_1}\beta(u-\psi)\partial_t(\zeta^2(w-k)^+)dxdt\leq \|\beta\|_{\infty}\Big(\int\limits_{Q_1}(w-k)^+\zeta\partial_t\zeta dxdt+\|(u_{tt})^-\|_\infty\int\limits_{Q_1}\mathbf{1}_{\{u>k\}}dxdt\Big).$$
\underline{III. The estimates for the remaining terms are fairly standard and straightforward. }

We now substitute into equation~\eqref{maineq} the estimates obtained in parts I, II, and III. Note that the upper limit of $t$-integration, $t=0$, can be replaced by any $-1<t\leq 0$. Hence, we have an \textit{energy inequality}

$$\max_{-1\leq t\leq 0}\int\limits_{B_1}(\zeta(w-k)^+)^2dx+\int\limits_{-1}^0\|\zeta(w-k)^+\|_{H^{\frac{\alpha}{2}}}dxdt\ \ \ \ \ \ \ \ \ \ \ \ \ \ \ \ \ \ \ \ \ \ \ \ \ \ \ \ \ \ \ \ \ \ \ \ \ \ \ \ \ \ \ \ \ \ \ \ \ \ \ \ \ \ \ $$

\begin{equation}\label{eq5}
 \ \ \ \ \ \ \ \ \ \ \ \ \ \ \ \ \ \ \ \ \ \ \ \ \ \ \ \ \ \ \ \leq 
C\int_{Q_1}\bigg(([(w-k)^+]^2+(w-k)^+)(|\partial_t\zeta|+1)+\mathbf{1}_{\{w>k\}}\bigg)dxdt
\end{equation}
where $C=C(\|\beta\|_{\infty}, \|(u_{tt})^-\|_{\infty},\|f_t\|_{\infty},\Lambda, \alpha, n)$, and we have omitted term~\eqref{eq3} and the second term in~\eqref{maineq}; they have the correct sign and do not affect the inequality. 

We will obtain a recursive inequality using~\eqref{eq5}. Thus we define for $m=0,1,2,...$ 
$$k_m:=\frac{1}{2}\bigg(1-\frac{1}{2^m}\bigg), \ \ \ R_m:=\frac{1}{2}\bigg(1+\frac{1}{2^m}\bigg)$$
$$Q_m:=\bigg\{(x,t):|x|\leq R_m,\ -R_m^\alpha\leq t\leq 0\bigg\}$$
and the smooth cutoff functions
$$\mathbf{1}_{Q_{m+1}}\leq \zeta_m\leq \mathbf{1}_{Q_m} \ \ \ \ \ \ \ \text{with}\ \ \ \ \ \ \  |\grad \zeta_m|\leq C2^m,\ \ |\partial_t\zeta_m|\leq C4^m.$$
Substituting $\zeta=\zeta_m$ and setting $w_m=(w-k_m)^+$ in the \textit{energy inequality}
~\eqref{eq5} we obtain, by using the Sobolev inequality, that
$$\bigg(\int_{Q_m}(\zeta_mw_m)^{2\frac{n+\alpha}{n}}dxdt\bigg)^{\frac{n}{n+\alpha}}\leq C\bigg( (C4^m+1)\Big[\int_{Q_m}w_m^2dxdt+\int_{Q_m}w_mdxdt\Big]+|Q_m\cap\{w_m\neq 0\}|\bigg)$$
$$\leq  C\bigg(4^mC\int_{Q_m}w_m^2dxdt+|Q_m\cap\{w_m\neq 0\}|\bigg).$$
Since $$(k_m-k_{m-1})^2|Q_m\cap\{w_m\neq 0\}|\leq \int_{Q_m}w^2_{m-1}dxdt$$
we obtain
\begin{eqnarray}\label{eq6}
\int(\zeta_mw_m)^2dxdt&\leq& \bigg(\int(\zeta_mw_m)^{2\frac{n+\alpha}{n}}dxdt\bigg)^{\frac{n}{n+\alpha}}\bigg|Q_m\cap\{w_m\neq 0\}\bigg|^{\frac{\alpha}{n+\alpha}}\nonumber\\
&\leq& CA^{m-1}\bigg(\int(\zeta_{m-1}w_{m-1})^2dxdt\bigg)^{1+\frac{\alpha}{n+\alpha}}
\end{eqnarray}
where $A>1$. 

Setting $$I_m:=\int(\zeta_mw_m)^2dxdt$$
we have our the recursive inequality
\begin{equation}\label{recurineq}
I_m\leq CA^{m-1}I_{m-1}^{1+\frac{\alpha}{n+\alpha}}.
\end{equation}
Hence, by induction $$I_m\leq A^{-\frac{n+\alpha}{\alpha}m}I_0$$ provided $$I_0\leq \frac{1}{A^{(\frac{n+\alpha}{\alpha})^2}C^{\frac{n+\alpha}{\alpha}}}=:\sigma$$
Consequently, $$\ \ \ \ \ \ \ \ \ \ \ \ \ \ \ \ \ \ \ \  I_m\longrightarrow 0\ \ \ \ \ \ \text{as}\ \ \  m\rightarrow 0.$$ 
\end{proof}

In the following Lemma \ref{lemma3.2}, we address the more delicate alternative case to that of Lemma \ref{lemma3.1}.

\begin{lemma}\label{lemma3.2} Under the same hypotheses as in Lemma~\ref{lemma3.2}, if
\begin{equation}\label{eq3.2}
\fint\limits_{Q_1}(1-v^\varepsilon)^+dxdt\geq\sigma
\end{equation}
then there exists a constant $C>0$, independent of $\varepsilon$, such that $(v^\varepsilon)^+\leq 1-C\sigma$ in $Q_{1/2}$.
\end{lemma}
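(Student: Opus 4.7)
The plan is to carry out De Giorgi's ``second alternative'' (decrease of supremum) in parallel with the $L^2$--$L^\infty$ iteration of Lemma~\ref{lemma3.1}. I would first derive, by multiplying the first equation of~\eqref{derprob} by $\zeta^2(v^\varepsilon-k)^+$ and mimicking the steps that led to~\eqref{eq5}, an energy inequality of the same shape with $(v^\varepsilon-k)^+$ in place of $(w-k)^+$. The kernel estimate of Part~I applies verbatim, and the temporal quasiconvexity step of Part~II is preserved because $\beta'_\varepsilon\le 0$ keeps a favorable sign when paired with $(v^\varepsilon-k)^+$ for $k\ge 0$, so the $\beta$-integral is absorbed into the lower-order forcing controlled by $\|(u_{tt})^-\|_\infty$.

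Next, I would convert the hypothesis~\eqref{eq3.2}, together with the $\varepsilon$-uniform bound $|v^\varepsilon|\le M$ coming from the a priori estimates in Section~\ref{psp}, into a positive lower bound on a ``cold'' level set via Chebyshev's inequality:
\begin{equation*}
\bigl|\{v^\varepsilon\le 1-\sigma/2\}\cap Q_1\bigr|\ge c_0\sigma.
\end{equation*}
With this in hand, the nonlocal parabolic analogue of De Giorgi's intermediate-value (isoperimetric) lemma is invoked on the dyadic sequence $k_j=1-C\sigma(1+2^{-j})/2$: pairing the cold measure bound with the energy estimate forces the measures $|\{v^\varepsilon>k_j\}\cap Q_{3/4}|$ to decay geometrically in $j$. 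Once this measure drops below the threshold $\sigma$ supplied by Lemma~\ref{lemma3.1}, a final De Giorgi iteration of the same form as in Lemma~\ref{lemma3.1}, now applied to $(v^\varepsilon-k)^+$, delivers the pointwise conclusion $(v^\varepsilon)^+\le 1-C\sigma$ in $Q_{1/2}$.

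The main obstacle will be the nonlocal parabolic intermediate-value lemma with quantitative dependence on $\sigma$: the energy computation, the Chebyshev step, and the final iteration are routine adaptations of Lemma~\ref{lemma3.1}, but running De Giorgi's ``water-filling'' argument simultaneously with the fractional operator $\mathcal L$ and the parabolic scaling, while keeping the dependence on $\sigma$ linear, is the technically delicate ingredient.
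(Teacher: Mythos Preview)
Your overall strategy---energy inequality for $(v-k)^+$, Chebyshev to extract a cold set, a nonlocal parabolic isoperimetric lemma to shrink the hot measure, then a final De Giorgi iteration---is exactly the paper's route. The paper first rescales to $w=\tfrac{4}{\sigma}\bigl(v-(1-\tfrac{\sigma}{4})\bigr)$ so that $w\le 1$ and $|\{w\le 0\}|\ge\tfrac{\sigma}{4}|Q_1|$, then iterates via $w_{k+1}=\lambda^{-2}(w_k-(1-\lambda^2))$ rather than your dyadic levels approaching $1-C\sigma/2$; this is largely cosmetic, though the paper's rescaling keeps the isoperimetric lemma in a fixed normalized form at every step. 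The isoperimetric step you flag as the main obstacle is indeed the technical core and is stated and proved in full as Lemma~\ref{lemma3.3} inside the proof.

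Your handling of the penalty term, however, is incorrect. After testing~\eqref{derprob} with $\zeta^2(v-k)^+$ and rearranging, the penalty contributes $-\int\beta'(u-\psi)\,v\,\zeta^2(v-k)^+$ to the right-hand side of the energy inequality; on $\{v>k\ge 0\}$ one has $v>0$, so if $\beta'\le 0$ this term is \emph{nonnegative} and cannot be dropped---the opposite of favorable. Your fallback of absorbing it via quasiconvexity also fails here: integrating $\partial_t\beta$ by parts against $\zeta^2(v-k)^+$ (as opposed to $\zeta^2(1-v-k)^+$ in Lemma~\ref{lemma3.1}) produces a term requiring a bound on $(\partial_t v)^+=(u-\psi)_{tt}^+$, not on $(u_{tt})^-$. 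The paper's argument instead uses $\beta'\ge 0$, so that after passing to the subsolution inequality~\eqref{subsol1} the contribution $-\int\beta'(u-\psi)(\zeta(w-k)^+)^2\le 0$ is simply discarded; the resulting energy inequality~\eqref{enineq2} has constant $C=C(\|f_t\|_\infty,\Lambda,\alpha,n)$ only, and quasiconvexity plays no role in this lemma.
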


\begin{proof} 
Again, we drop the parameter $\varepsilon$. 
Observe that~\eqref{eq3.2} implies that 
$$\big|\big\{v<1-\tfrac{\sigma}{4}\big\}\cap Q_1\big|\geq \tfrac{1}{4}\sigma|Q_1|.$$
Therefore, we set $$w:=\tfrac{4}{\sigma}\big[v-(1-\tfrac{\sigma}{4})\big]$$
and we see that, since $\beta'\geq0$, $w$ satisfies
\begin{equation}\label{subsol1}
\mathcal{L} w-\partial_t w\geq\beta'(u-\psi)w+f_t.
\end{equation} 
Then, following the same approach as in the proof of the previous Lemma~\ref{lemma3.1}, we multiply both sides of the inequation~\eqref{subsol1} by $\zeta^2(w-k)^+$ and integrate by parts to obtain
$$\tfrac{1}{2} \int\limits_{-1}^0 D(w,\zeta^2(w-k)^+)dt+\int\limits_{Q_1}(\partial_t w(x,t))(\zeta^2(w-k)^+)dxdt\ \ \ \ \ \ \ \ \ \ \ \ \ \ \ \ \ \ \ \ \ \ \ \ \ \ \ \ \ \ \ \ \ \ \ \ \ \ \ \ \ \ \ \ \ \ \ \ \ \ $$
\begin{equation}\label{ineq}
\ \ \ \ \ \ \ \ \ \ \ \ \ \ \ \ \ \ \ \ \ \ \ \ \ \ \ \ \ \ \ \ \ \ \ \ \ \ \ \ \ \ \ \ \ \ \ \leq-\int\limits_{Q_1}\beta'(u-\psi))(\zeta (w-k)^+)^2dxdt-\int\limits_{Q_1}f_t\zeta^2(w-k)^+dxdt
\end{equation}
and, since the first term on the right-hand side is nonpositive,
\begin{equation}\label{mainineq}
\tfrac{1}{2} \int\limits_{-1}^0 D(w,\zeta^2(w-k)^+)dt+\int\limits_{Q_1}(\partial_t w(x,t))(\zeta^2(w-k)^+)dxdt\leq -\int\limits_{Q_1}f_t\zeta^2(w-k)^+dxdt.
\end{equation}
Estimating the terms as in the previous Lemma~\ref{lemma3.1} we arrive at a similar \textit{energy inequality}:

$$\max_{-1\leq t\leq 0}\int\limits_{B_1}(\zeta(w-k)^+)^2dx+\int\limits_{-1}^0\|\zeta(w-k)^+\|_{H^{\frac{\alpha}{2}}}dxdt\ \ \ \ \ \ \ \ \ \ \ \ \ \ \ \ \ \ \ \ \ \ \ \ \ \ \ \ \ \ \ \ \ \ \ \ \ \ \ \ \ \ \ \ \ \ \ \ \ \ \ \ $$
\begin{equation}\label{enineq2} \ \ \ \ \ \ \ \ \ \ \ \ \ \ \ \ \ \ \ \ \ \ \ \ \ \ \ \ \ \ \ \leq 
C\int\limits_{Q_1}\bigg([(w-k)^+]^2|\partial_t\zeta|+(w-k)^+\bigg)dxdt
\end{equation}
where the constant $C=C(\|f_t\|_{\infty},\Lambda, \alpha, n)$ depends only on the indicated parameters.

By repeating the steps of the previous Lemma~\ref{lemma3.1}, we obtain a recursive inequality similar to~\eqref{recurineq}. Nevertheless, as we have seen, in order for the corresponding quantity $I_m$ to converge to $0$,  the $L^2$-norm of our $w^+$ must be sufficiently small. Nevertheless, in our case, 
$$\fint\limits_{Q_1}(w^+)^2dxdt\leq 1-\tfrac{1}{4}\sigma$$ 
which is not small enough to ensure convergence. 
To overcome this difficulty, we employ the De Giorgi isoperimetric lemma (Lemma~\ref{lemma3.3}), adapted to our setting. While its proof is similar to that of Lemma 5.1 in \cite{ACM2}, we include it here for completeness and to highlight the differences.
\begin{lemma}\label{lemma3.3}
Given an $\eta>0$ there exists a $\delta>0$, $\mu>0$, and $\lambda\in (0,1)$, depending only on $\alpha$, $\Lambda$, and $n$, such that for any bounded $w$ , $w\leq 1$, satisfying~\eqref{subsol1} in $Q_1$ and 
$$|\{(x,t)\in Q_1 : w\leq 0\}|\geq \mu|Q_1|,$$ 
if 
$$|\{(x,t)\in Q_1:0<w<1-\lambda\}|<\delta|Q_1|$$ 
then 
$$\fint\limits_{Q_{1/2}}\Big(\big(w-(1-\lambda^2)\big)^+\Big)^2dxdt<\eta.$$
\end{lemma}
\begin{proof}
Suppose the conclusion is false, then for some $\eta>0$
$$0<\eta\leq \fint\limits_{Q_{1/2}}\Big(\big(w-(1-\lambda^2)\big)^+\Big)^2dxdt\leq\frac{|Q_{1/2}\cap\{w>1-\lambda^2\}}{|Q_{1/2}|}.$$
Therefore there exists a $t_0>-\frac{1}{2}$ such that
$$|\{x\in B_{1/2}:w(x,t_0)>1-\lambda^2\}|>\frac{\eta}{2}|Q_{1/2}|.$$
Moreover for $\mathbf{1}_{B_{1/2}}\leq\xi(x)\leq \mathbf{1}_{B_1}$ and for $\lambda'=\sqrt{2}\lambda$
$$E(t_0):=\int\limits_{\R^n}[\xi(x)(w(x,t_0)-(1-(\lambda')^2))^+]^2dx$$$$\ \ \ \ \ \ \ \ \ \ \ \ \geq((\lambda')^2-(\lambda^2))^2|B_{1/2}\cap\{w>1-\lambda^2\}|$$
$$\geq(\lambda')^4\frac{\eta}{8}|Q_{1/2}|)\ \ \ \ \ \ \ \ \ \ \ \ \ \ \ \ \ $$

On the other hand, we multiply both sides of \eqref{subsol1} by $\zeta^2((w-(1-(\lambda')^2)^+$  where $\zeta(x,t):=\xi (x)\tau (t)$, with $\xi$ as above and 
$\mathbf{1}_{[-1/2,0]}\leq\tau(t)\leq\mathbf{1}_{[-1,0]}$, and integrate by parts to have
\begin{equation}\label{mainineq1}
\frac{1}{2} \int\limits_{-1}^0 D(w,\zeta^2(w-(1-(\lambda')^2)^+)dt+\int\limits_{Q_1}(\partial_t w(x,t))(\zeta^2(w-(1-(\lambda')^2)^+)dxdt$$$$\leq -\int\limits_{Q_1}f_t\zeta^2(w-(1-(\lambda')^2)^+dxdt.
\end{equation}
Then as in Lemma~\ref{lemma3.1} we obtain
$$\int\limits_{\R^n}(\zeta(w-(1-(\lambda')^2)^+)^2(x,0)dx+\int\limits_{-1}^0  D(\zeta(w-(1-(\lambda')^2)^+,\zeta(w-(1-(\lambda')^2)^+)dt$$ 
$$+2\int\limits_{-1}^0\iint\limits_{\R^{2n}}((w-(1-(\lambda')^2))^-(x,t)(\zeta^2((w-(1-(\lambda')^2))^+)(y,t)K(x,y,t)dydxdt $$
$$\leq\int\limits_{-1}^0\iint\limits_{\R^{2n}}(\zeta(x,t)-\zeta(y,t))^2(w-(1-(\lambda')^2))^+(x,t)(w-(1-(\lambda')^2))^+(y,t)K(x,y,t)dydxdt$$
\begin{equation}\label{mainineq2}
+\int\limits_{-1}^0\int\limits_{\R^n}((w-(1-(\lambda')^2))^+)^2(\zeta^2)_tdxdt-\int\limits_{\R^n}f_t\zeta^2(w-(1-(\lambda')^2)^+dxdt.   
\end{equation}
Observe that the three terms on the right-hand side of~\eqref{mainineq2} are each controlled by $C(\lambda')^4$, where the constant $C=C(\|f_t\|_{\infty},\Lambda, \alpha, n)$. Since all three terms on the left-hand side of~\eqref{mainineq2} are nonnegative, we may omit the first two and retain only the third. Notice that this is the opposite approach to the one taken in deriving the \textit{energy inequality} in Lemma~\ref{lemma3.2}, where we instead focused on the first two terms. Therefore, since $\tau(t)\equiv1$ for $\in [-\tfrac{1}{2},0]$, we have
\begin{equation}\label{mainineq3}
\int\limits_{-1}^0\iint\limits_{\R^{2n}}\big[w-(1-(\lambda')^2)\big]^-(x,t)\Big(\xi^2\big[w-(1-(\lambda')^2)\big]^+\Big)(y,t)K(x,y,t)dydxdt\leq C(\lambda')^4|Q_1|.
\end{equation}
Set $I:=\big\{t\in[-1,0]:|\{w(.,t)\leq0\}\cap B_1|\geq\frac{1}{2}\mu|Q_1|\big\}$ then, since, $\inf_{|x-y|\geq2}K(x,y,t)\geq C\Lambda^{-1}$, the left-hand side of~\eqref{mainineq3} is bounded below by
$$\frac{C(1-(\lambda')^2)\mu|Q_1|}{2\Lambda}\int\limits_I\int\limits_{\R^n}\big[\xi^2((w-(1-(\lambda')^2))^+\big](y,t)dydt.$$ 
Hence, by choosing $\lambda'<\frac{1}{\sqrt2}$ and using $(w-(1-(\lambda')^2)^+\leq(\lambda')^2$, we obtain
\begin{equation}\label{mainineq4}
\int\limits_I\int\limits_{\R^n}\big((\xi(w-(1-(\lambda')^2)^+\big)^2(y,t)dydt\leq\frac{C}{\mu|Q_1|}(\lambda')^6.
\end{equation}
Furthermore take $\lambda'\leq(\mu|Q_1|/C)^8$. Since $|I|\geq\frac{\mu|Q_1|}{2|B_1|}$ by choosing a set $F\subset I$ with $|F|<(\lambda')^\frac{1}{8}$ we have
$$\int\limits_{\R^n}(\xi(w-(1-(\lambda')^2)^+)^2(y,t)dy\leq (\lambda')^{6-\frac{1}{3}}\ \ \ \forall \ t\in I\setminus F .$$
Moreover for $(\lambda')^{2-\frac{1}{3}}\leq\frac{\eta}{16}|Q_{1/2}|$ and $\forall\ t\in I\setminus F$
$$E(t)=\int\limits_{\R^n}(\xi(w-(1-(\lambda')^2)^+)^2(y,t)dy\leq(\lambda')^4\frac{\eta}{16}|Q_{1/2}|$$

Now, let $t^*<t_0$ ($t_0$ as above) be the first time for which 
$E(t^*)\leq(\lambda')^4\frac{\eta}{16}|Q_{1/2}|$. Since $E(t_0)\geq(\lambda')^4\frac{\eta}{8}|Q_{1/2}|)$ and $\frac{d}{dt}E(t)\leq C(\lambda')^4$, the energy must decrease by at least $\Delta E=(\lambda')^4\frac{\eta}{16}|Q_{1/2}|$ 
over the time interval $J:=(t^*,t_0)$
of length 
$|J|\geq\frac{\Delta E}{C(\lambda')^4}=\frac{\eta}{16C}|Q_{1/2}|$. Notice the estimate $\frac{d}{dt}E(t)\leq C(\lambda')^4$ follows in a similar manner as in~\eqref{mainineq2} except that we now multiply the equation by
$(\xi(w-(1-(\lambda')^2)^+)^2$ and integrate by parts only in space. In the resulting inequality, one of the terms on the left-hand side is precisely 
$E'(t)$ while the remaining two terms are non-negative and can therefore be omitted. The right-hand side remains unchanged.

For $t\in J\cap N$ where $N:=\{t\in[-1,0]:|\{w(.,t)\leq0\}\cap B_1|\geq\frac{1}{2}\mu|Q_1|$ we have
$$C(\lambda')^4 |Q_1|\geq\int\limits_{-1}^0\iint\limits_{\R^{2n}}\big[w-(1-(\lambda')^2)\big]^-(x,t)\Big(\xi^2\big[w-(1-(\lambda')^2)\big]^+\Big)(y,t)K(x,y,t)dydxdt $$$$\geq\frac{C\mu|Q_1|}{2\Lambda}\int\limits_{J\cap N}\int\limits_{\R^n}\xi^2(x)\big[w-(1-(\lambda')^2)\big]^+dxdt\ \ \ \ \ \ \ \ \ \ \ \ \ \ \ \ \ \ \ \ \ \ \ \ \ \ $$$$\ \ \ \ \ \ \geq\frac{C\mu|Q_1|}{4(\lambda')^2\Lambda}\int\limits_{J\cap N}\int\limits_{\R^n}\big(\xi(x)\big[w-(1-(\lambda')^2)\big]^+\big)^2dxdt=\frac{C\mu|Q_1|}{4(\lambda')^2\Lambda}\int\limits_{J\cap N}E(t)dt $$$$\geq\frac{C\mu^2(\lambda')^2|Q_1||Q_{1/2}||J\cap N|}{64\Lambda}.\ \ \ \ \ \ \ \ \ \ \ \ \ \ \ \ \ \ \ \ \ \ \ \ \ \ \ \ \ \ \ \ \ \ \ \ \ \ \ \ \ \ \ \ \ \ $$
Therefore,
$$|J\cap N|\leq\overline{C}\frac{(\lambda')^2}{\mu^2}$$
and for $(\lambda')^2\leq(\mu^2|J|/(2\overline{C}$  implies
$$|J\cap N|\leq\frac{|J|}{2}.$$
Hence, for every $t\in J\setminus N$
$$M(t):=|\{0<w(.,t)<1-\lambda\}|\geq(1-\frac{1}{2}\mu|Q_1|-\frac{1}{2}\eta|Q_{1/2}|>\frac{1}{2}$$
for $\mu=\eta$ and
$$|\{(x,t)\in Q_1:0<w<1\}|\geq\int\limits_{-1}^0M(t)\,dt\geq\int\limits_{J\setminus N}M(t)\,dt\geq\frac{|J|}{4}\geq\frac{\eta}{16}|Q_1|$$
a contradiction for $\delta=\frac{\eta}{16}$.
\end{proof}
Having the DeGiorgi isoperimetric lemma (Lemma \ref{lemma3.3}) at our disposal  we proceed by induction; we set$$\ \ \ \ \ \ \  \ \ \ \ \ \  \ \ w_{k+1}=\frac{1}{\lambda^2}(w_k-(1-\lambda^2)), \ \ \ \ \ \ w_0=w\ \ \ \ \ \ \ \ \ \ \ \text{for} \ \ \  k=0,1,2,...$$
which satisfy \eqref{subsol1}. We will show that in finite number of steps $k_0=k_0(\delta)$ (where $\delta$ is defined as in Lemma \ref{lemma3.3}),
$$|\{w_{k_0}>1-\lambda \}|=0.$$
Indeed, if,  for $k=0,1,...,k_0$, we have $|\{0<w_k<1-\lambda\}\cap Q_1|\geq\delta|Q_1|$  then
$$|\{w_k>1-\lambda\}\cap Q_1|=|\{w_k>0\}\cap Q_1|-|\{0<w_k<1-\lambda\}\cap Q_1|$$$$\ \ \ \ \leq |\{w_k>0\}\cap Q_1|-\delta |Q_1|$$$$\ \ \ \ \ \ \ \ \ \ \ \ \ \ \ \leq|\{w_{k-1}>1-\lambda^2\}\cap Q_1|-\delta |Q_1|$$$$\ \ \ \ \ \ \ \ \ \ \ \ \ \leq|\{w_{k-1}>1-\lambda\}\cap Q_1|-\delta |Q_1|$$$$\ \ \ \ \ \ \leq |\{w_0>0\}\cap Q_1|-k\delta |Q_1|$$$$\ \ \ \ \ \ \ \ \ \ \ \ \ \ \ \ \ \ \ \ \leq 0\ \ \ \ \ \ \ \ \ \ \ \ \ \ \ \ \ \ \ \ \ \ \ \ \ \ \ \ \ \ \ \text{if}\ \ k\geq\frac{1}{\delta}.$$
On the other hand, if there exists an index $k'$, $0\leq k'\leq k_0$, such that $|\{0<w_{k'}<1-\lambda\}\cap Q_1|\leq\delta|Q_1|$ then we apply Lemma \ref{lemma3.3} to $w_{k'}$ to make the average of $(w_{k'+1}^+)^2$ as small as we wish.

We now observe that $\overline{w}:=w_{k'+1}$ satisfies \eqref{subsol1} and the  corresponding \textit{energy inequality} \eqref{enineq2}. Therefore, repeating the steps of Lemma~\ref{lemma3.1} we obtain a recursive inequality analogous to \eqref{recurineq} with the corresponding $I_m$ converging to $0$ as  $m\rightarrow\infty$. Consequently,
$$v\leq 1-(\lambda')^{2(k'+1)}\tfrac{\sigma}{4}.$$
\end{proof}
We shall next iterate the above results over a dyadic sequence of shrinking cylinders in order to obtain the continuity of $(v^\varepsilon)^+$, independent of $\varepsilon$.
\begin{prop}\label{prop1}
Let $v^\varepsilon$ be a solution to \eqref{subsol1} in $Q$ then, for any $(x,t), (x_0,t_0)\in Q$,
$$\big|(v^\varepsilon)^+(x,t)-(v^\varepsilon)^+(x_0,t_0)\big|\leq C\omega(|x-x_0|^{\alpha}+|t-t_0)|)$$ 
where $C$ is independent of $\varepsilon$, and $\omega$ denotes the modulus of continuity.
\end{prop}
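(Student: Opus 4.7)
The plan is to iterate the dichotomy of Lemmas~\ref{lemma3.1} and~\ref{lemma3.2} across a dyadic sequence of shrinking parabolic cylinders centered at $(x_0,t_0)$, following the standard oscillation-decay template of De~Giorgi adapted to the nonlocal parabolic scaling. For $r\in(0,1]$ I would work on $Q_r(x_0,t_0):=B_r(x_0)\times(t_0-r^\alpha,t_0]$, set $M_r:=\sup_{Q_r(x_0,t_0)}(v^\varepsilon)^+$, and rescale via $\tilde v(x,t):=v^\varepsilon(x_0+rx,t_0+r^\alpha t)/M_r$. A direct calculation shows that $\tilde v$ satisfies an equation of the form~\eqref{derprob} on $Q_1$ with rescaled kernel $\tilde K(x,y)=r^{n+\alpha}K(x_0+rx,x_0+ry)$, which still obeys~\eqref{Kbounds} with the same $\Lambda$ because $r\leq 1$; moreover the coefficient of $\tilde v$ becomes $r^\alpha\beta'_\varepsilon(\cdot)$ and the forcing becomes $(r^\alpha/M_r)f_t$. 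Both are uniformly bounded in $r$ and $\varepsilon$ as long as $M_r\gtrsim r^\alpha$, which serves as the natural stopping threshold; below that threshold, continuity of $(v^\varepsilon)^+$ at scale $r$ is automatic and controlled by $r^\alpha$.

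Applying the dichotomy to $\tilde v$ then produces one of two alternatives: either \textbf{(A)} $\dashint_{Q_1}(1-\tilde v)^+\,dx\,dt<\sigma$, in which case Lemma~\ref{lemma3.1} gives $\tilde v\geq 1/2$ in $Q_{1/2}$ and hence $v^\varepsilon\geq M_r/2$ on $Q_{r/2}(x_0,t_0)$; or \textbf{(B)} the average is $\geq\sigma$, whence Lemma~\ref{lemma3.2} gives $(\tilde v)^+\leq 1-C\sigma$ on $Q_{1/2}$ and hence $M_{r/2}\leq(1-C\sigma)M_r$. Iterating along $r_k:=2^{-k}r_0$, at any point with $v^\varepsilon(x_0,t_0)\leq 0$ alternative (A) is ruled out at every scale (since $(x_0,t_0)\in\overline{Q_{r/2}}$ and (A) would force $v^\varepsilon(x_0,t_0)\geq M_r/2>0$), so only (B) survives, producing the geometric decay $M_{r_k}\leq(1-C\sigma)^k M_{r_0}$ and therefore a H\"older modulus for $(v^\varepsilon)^+$ at such a base point.

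The main obstacle is the complementary case $v^\varepsilon(x_0,t_0)>0$, where alternative (A) is bound to trigger at some first scale $r_{k_0}$, after which $v^\varepsilon\geq M_{r_{k_0}}/2$ throughout $Q_{r_{k_0+1}}$ and $(v^\varepsilon)^+=v^\varepsilon$ on that cylinder. To continue the iteration I would restart the procedure with the normalized shifted function $\tilde w:=(M_\rho-v^\varepsilon)/(M_\rho-m_\rho)$ at subsequent scales $\rho\leq r_{k_0+1}$, where $m_\rho:=\inf_{Q_\rho}v^\varepsilon$. A constant shift does not affect the operator but introduces an additional right-hand side term of the form $-(\beta'_\varepsilon M_\rho+f_t)/(M_\rho-m_\rho)$; after time rescaling this becomes $\rho^\alpha$ times the same expression and is uniformly bounded precisely when $M_\rho-m_\rho\gtrsim\rho^\alpha$---the same stopping threshold as above, now phrased in terms of the oscillation. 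Applying the dichotomy to $\tilde w$ then yields either $m_{\rho/2}\geq m_\rho+C\sigma(M_\rho-m_\rho)$ or $M_{\rho/2}\leq(M_\rho+m_\rho)/2$, each of which contracts the oscillation by a factor strictly less than one. Iterating down to the $\rho^\alpha$ floor and combining both regimes yields a modulus of continuity $\omega$ independent of $\varepsilon$, which is what is needed for passing to the limit as $\varepsilon\to 0$.
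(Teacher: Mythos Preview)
Your first phase—centering at a point with $(v^\varepsilon)^+(x_0,t_0)=0$, observing that alternative~(A) is impossible there (it would force $v^\varepsilon(x_0,t_0)\ge M_r/2>0$), and then iterating the decay of Lemma~\ref{lemma3.2}—is exactly the paper's argument. The only stylistic difference is the scaling: the paper does \emph{not} run a dyadic scheme with a stopping threshold but instead uses an \emph{adaptive} radius $R_k=\tfrac{\sigma}{8}M_k$, so that the contraction factor $\mu_k=1-C M_k^{1+n/2}$ degrades as $M_k\to 0$; one then argues that $M_{k+1}\le \mu_k M_k$ still forces $M_k\to 0$, yielding a (non-H\"older) modulus. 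Your dyadic-plus-threshold variant is an equally standard way to absorb the rescaled forcing, though note that to invoke the dichotomy you must also have Lemma~\ref{lemma3.1} available for the rescaled function, and its constant involves the product $\|\beta\|_\infty\|(u_{tt})^-\|_\infty$, which after normalisation by $M_r$ scales like $r^\alpha/M_r^2$ rather than $r^\alpha/M_r$; this shifts the correct stopping level but does not break the argument.

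The genuine divergence is in the case $(v^\varepsilon)^+(x_0,t_0)>0$. The paper does not run a two-sided oscillation decay at all: it simply asserts that it suffices to treat centers with $(v^\varepsilon)^+=0$, and closes with ``a standard barrier argument to get the continuity from the future.'' Your alternative via $\tilde w=(M_\rho-v^\varepsilon)/(M_\rho-m_\rho)$ is plausible, but as written it has a gap. Lemmas~\ref{lemma3.1}--\ref{lemma3.2} are proved for solutions of~\eqref{derprob}, exploiting the identity $\beta'_\varepsilon(u-\psi)v=\partial_t[\beta_\varepsilon(u-\psi)]$ together with the one-sided quasiconvexity bound $\|(u_{tt})^-\|_\infty$; they never estimate $\beta'_\varepsilon$ directly. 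Your displayed ``additional right-hand side term $-(\beta'_\varepsilon M_\rho+f_t)/(M_\rho-m_\rho)$'' keeps a bare $\beta'_\varepsilon$, whose uniform boundedness you would have to justify separately (it is not among the constants in the energy inequalities~\eqref{eq5} or~\eqref{enineq2}). The honest route is to re-derive both energy inequalities for $\tilde w$: for the analogue of Lemma~\ref{lemma3.2} the $\beta'v$ term retains the favorable sign because $v>0$ on $Q_{r_{k_0+1}}$; for the analogue of Lemma~\ref{lemma3.1} the integration by parts again leaves $\beta v_t$ with the quasiconvexity sign, but the resulting constant carries $(M_\rho-m_\rho)^{-2}$, so your stated threshold $M_\rho-m_\rho\gtrsim\rho^\alpha$ is not the right one. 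None of this is fatal for obtaining \emph{some} modulus, but the sentence claiming uniform boundedness ``precisely when $M_\rho-m_\rho\gtrsim\rho^\alpha$'' needs to be reworked.
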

\begin{proof}
It is enough to take the points $(x,t), (x_0,t_0)$ of distance between them less than one and also  $(v^\varepsilon)^+(x_0,t_0)=0$. We drop again $\varepsilon$ and take $(x_0,t_0)=(0,0),\ Q=Q_1$. Set
$$Q_k:=Q_{R_k},\ \ \ \ \ M_k:=\sup_{Q_k} v $$
where $R_k
:=\frac{\sigma}{8}M_k$ and $$\bar{v}:=\frac{v_k}{M_k}$$
where $v_k(x,t):=v(R_kx,(R_k)^\alpha t)$. Then $\bar{v}$ satisfies \eqref{subsol1} and therefore by Lemma \ref{lemma3.2}, $$\sup_{Q_{R'}}\bar{v}\leq 1-C\sigma$$
or in our original setting
$$\sup_{Q_{k+1}}v\leq \mu_k\sup_{Q_k}v$$
where $\mu_k=1-C(\sup_{Q_k}v^+)^{1+\frac{n}{2}}$. So, even, if $\mu_k\rightarrow 1$ as $k\rightarrow \infty$, $M_k\rightarrow 0$. 
To finish the proof, we use a standard barrier argument to get the continuity from the future. 
\end{proof}

\begin{thm}\label{ThmOptiCont}
Let $u$ be a solution to (\ref{statem}) then $(u-\psi)_t^+$ is continuous. 
\end{thm}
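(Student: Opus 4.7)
The plan is to deduce the theorem from Proposition \ref{prop1} by passing to the limit in the penalization parameter $\varepsilon$. Proposition \ref{prop1} provides a modulus of continuity for $(v^\varepsilon)^+ = [\partial_t(u^\varepsilon-\psi^\varepsilon)]^+$ that is independent of $\varepsilon$, and Section \ref{psp} provides a uniform $L^\infty$ bound on $u^\varepsilon_t$ (and smoothness of $\psi^\varepsilon$). Hence the family $\{(v^\varepsilon)^+\}_\varepsilon$ is uniformly bounded and equicontinuous on every compact subset of $Q$. By Arzel\`{a}--Ascoli, a subsequence $(v^{\varepsilon_j})^+$ converges locally uniformly in $Q$ to a continuous function $V$.

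The main step is then to identify $V$ with $[(u-\psi)_t]^+$. Because $u^\varepsilon\to u$ and $\psi^\varepsilon\to\psi$ uniformly while $\|u^\varepsilon_t\|_\infty$ and $\|\psi^\varepsilon_t\|_\infty$ are uniformly bounded, the differences $u^\varepsilon-\psi^\varepsilon$ are uniformly Lipschitz in time, so $u-\psi$ is Lipschitz in $t$ and $(u-\psi)_t$ is a well-defined $L^\infty$ function. Moreover, $v^\varepsilon\rightharpoonup^*(u-\psi)_t$ in $L^\infty(Q)$, since the distributional derivatives pass to the limit along the uniform convergence. I would then argue pointwise: at a point where $V(x_0,t_0)>0$, continuity of $V$ together with local uniform convergence produces a neighborhood on which $v^{\varepsilon_j}=(v^{\varepsilon_j})^+$ for $j$ large, so $v^{\varepsilon_j}\to V$ uniformly there; combined with the weak-$*$ identification, this gives $(u-\psi)_t=V$ almost everywhere in that neighborhood, hence $[(u-\psi)_t]^+=V$ there. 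Conversely, at a point where $V(x_0,t_0)=0$, for any $\delta>0$ there is a neighborhood on which $(v^{\varepsilon_j})^+<\delta$, hence $v^{\varepsilon_j}<\delta$, so the weak-$*$ limit satisfies $(u-\psi)_t\le\delta$ a.e.\ there, and letting $\delta\to0$ yields $[(u-\psi)_t]^+=0=V$ locally.

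With the identification established, uniqueness of the continuous representative of $[(u-\psi)_t]^+$ forces the full family $(v^\varepsilon)^+$ (not merely a subsequence) to converge locally uniformly to $V=[(u-\psi)_t]^+$ as $\varepsilon\to0$. In particular, $(u-\psi)^+_t$ admits a continuous representative in the interior of $Q$, which is the claim of the theorem.

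The principal obstacle is the identification step, because uniform convergence of the nonlinear quantity $(v^\varepsilon)^+$ and weak-$*$ convergence of the linear quantity $v^\varepsilon$ must be reconciled without assuming any control over the negative part $(v^\varepsilon)^-$, which may blow up near points where the solution first contacts the obstacle. The dichotomy between $V>0$ and $V=0$ circumvents this: on the positive set $(v^\varepsilon)^-$ eventually vanishes by equicontinuity, while on the zero set the one-sided bound is automatic from $(v^\varepsilon)^+<\delta$. A minor technical point is treating neighborhoods of the parabolic boundary, for which the boundary data in \eqref{derprob} supply the needed compatibility.
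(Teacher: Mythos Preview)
Your proposal is correct and follows the same route as the paper: use the $\varepsilon$-uniform modulus of continuity from Proposition~\ref{prop1}, apply Arzel\`a--Ascoli, and pass to the limit along the penalization. The paper's own proof is a single sentence (``It is well known that a subsequence of $v^\varepsilon$ will converge uniformly to the unique solution of~\eqref{statem}''), so your version is considerably more detailed; in particular, your dichotomy argument for identifying the uniform limit $V$ of $(v^{\varepsilon_j})^+$ with $[(u-\psi)_t]^+$---splitting into $\{V>0\}$ where $v^{\varepsilon_j}=(v^{\varepsilon_j})^+$ eventually, and $\{V=0\}$ where the one-sided bound suffices---supplies exactly the step the paper leaves implicit.
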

\begin{proof}
It is well known that a subsequence of $v^\varepsilon$ will converge uniformly to the unique solution of (\ref{statem}).
\end{proof}
\section{H\"{o}lder continuity of time derivative}\label{hole}
In general, one does not expect the time derivative $u_t$ to be continuous, and a discontinuity may appear when the solution first comes into contact with the obstacle$-$that is, $u_t^-$ may experience a jump. On the other hand, if one assumes that the solution has already sufficiently “coincided” with the obstacle in the past then $u_t^-$ is indeed continuous. The purpose of this section is precisely to establish this continuity. In fact, we will show that the full time derivative $u_t$ is H\"older continuous. It is worth noting that this result does not rely on the time quasiconvexity. The following result will be used in the proof of Theorem~\ref{hthm} below.

\begin{lemma}\label{hlem1}
Let $u$ be the solution to our problem then $\frac{\partial u}{\partial t}\in L^2\big((0,T]:H^{\alpha/2}(\Omega)\big)$.
\end{lemma}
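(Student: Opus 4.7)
The plan is to establish the bound first at the penalized level for $v^\varepsilon=\partial_t(u^\varepsilon-\psi^\varepsilon)$, so that the bound passes to the limit. Since $\psi$ is smooth, controlling $\partial_t u$ in $L^2\big((0,T];H^{\alpha/2}(\Omega)\big)$ is equivalent to controlling $v^\varepsilon$ in the same space (uniformly in $\varepsilon$). I would use the derived equation \eqref{derprob} and test it against $v^\varepsilon$ itself, exploiting the nonnegativity of $\beta'_\varepsilon$ and the coercivity of $\mathcal{L}$ in $H^{\alpha/2}$.

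The first concrete step is to reduce to a function vanishing on $\Omega^c$. Write $v^\varepsilon = \tilde v^\varepsilon + g^\varepsilon$, where $g^\varepsilon$ is a smooth extension of $(-\psi^\varepsilon)_t$ to $\mathbb{R}^n\times[0,T]$, so that $\tilde v^\varepsilon$ vanishes outside $\Omega$ and inherits an equation with a new right-hand side of the form $F^\varepsilon := f_t+\mathcal{L}g^\varepsilon - \partial_t g^\varepsilon -\beta'_\varepsilon(u^\varepsilon-\psi^\varepsilon)g^\varepsilon$, whose $L^2_{t,x}$ norm is bounded uniformly in $\varepsilon$ thanks to the smoothness of $\psi^\varepsilon$ and the a priori $L^\infty$ bound on $\beta_\varepsilon$ (so that $\beta'_\varepsilon g^\varepsilon$ is integrated only where $u^\varepsilon\leq\psi^\varepsilon+\varepsilon$, giving uniform $L^1_{t,x}$ control).

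The second step is the energy identity: multiply the equation for $\tilde v^\varepsilon$ by $\tilde v^\varepsilon$ itself (no spatial cutoff is needed because $\tilde v^\varepsilon\equiv 0$ in $\Omega^c$) and integrate over $\Omega\times(0,t)$. The transport term gives
\[
\int_{\Omega}\tilde v^\varepsilon\,\partial_t \tilde v^\varepsilon\,dx=\tfrac12\tfrac{d}{dt}\|\tilde v^\varepsilon\|_{L^2(\Omega)}^{2},
\]
the nonlocal term, by the usual symmetric split already used in Lemma \ref{lemma3.1}, produces
\[
\int_{\Omega}\tilde v^\varepsilon\,\mathcal{L}\tilde v^\varepsilon\,dx = -\tfrac12 D(\tilde v^\varepsilon,\tilde v^\varepsilon)\leq -c\,\|\tilde v^\varepsilon\|_{H^{\alpha/2}(\Omega)}^{2}+C\|\tilde v^\varepsilon\|_{L^2(\Omega)}^{2},
\]
using \eqref{Kbounds} and the cutoff $\mathbf{1}_{|x-y|\leq 2}$ to absorb far-field contributions, and the penalization term $\beta'_\varepsilon(u^\varepsilon-\psi^\varepsilon)(\tilde v^\varepsilon)^2\geq 0$ is discarded from the left after being moved to the proper side. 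Collecting terms and applying Cauchy--Schwarz and Young's inequality to $\int F^\varepsilon\tilde v^\varepsilon$ yields, for every $t\in(0,T]$,
\[
\|\tilde v^\varepsilon(\cdot,t)\|_{L^2(\Omega)}^{2}+c\int_0^t\|\tilde v^\varepsilon\|_{H^{\alpha/2}(\Omega)}^{2}\,ds\leq \|\tilde v^\varepsilon(\cdot,0)\|_{L^2(\Omega)}^{2}+C\int_0^t\!\!\int_{\Omega}|F^\varepsilon|^2\,dx\,ds.
\]

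The third step is controlling the initial datum: $\tilde v^\varepsilon(x,0)=\mathcal{L}(\phi^\varepsilon-\psi^\varepsilon)(x)-g^\varepsilon(x,0)$, which is bounded in $L^2(\Omega)$ uniformly in $\varepsilon$ because $\phi-\psi$ is smooth enough for $\mathcal{L}(\phi-\psi)$ to be in $L^\infty(\Omega)$. Combining everything gives a bound on $v^\varepsilon$ in $L^2((0,T];H^{\alpha/2}(\Omega))$ uniform in $\varepsilon$. Finally, the a priori estimates discussed in Section \ref{psp} guarantee that (up to subsequence) $v^\varepsilon$ converges to $\partial_t(u-\psi)$; weak lower semicontinuity of the $H^{\alpha/2}$-norm transfers the uniform bound to the limit, and smoothness of $\psi$ then gives the claim for $\partial_t u$.

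I expect the main obstacle to lie in step two: the fact that $v^\varepsilon$ does not vanish on $\Omega^c$ means the naive test function yields uncontrolled boundary tails through $\mathcal{L}$, which is why the extension/truncation $\tilde v^\varepsilon$ is essential; the cutoff $\mathbf{1}_{|x-y|\leq 2}$ in \eqref{Kbounds} also forces us to absorb a large-distance term of size $\|\tilde v^\varepsilon\|_{L^2}^{2}$ into the $H^{\alpha/2}$-seminorm via a Gr\"onwall-type argument rather than pure coercivity.
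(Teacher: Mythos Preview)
Your approach is correct and rests on the same core idea as the paper: test the derived penalized equation \eqref{derprob} against $v^\varepsilon$ (or a close variant), use $\beta'_\varepsilon\ge 0$ to discard the term $\beta'_\varepsilon(v^\varepsilon)^2$, and extract the $H^{\alpha/2}$ seminorm from the coercivity of $D(\cdot,\cdot)$. The difference is purely in how the boundary/tail is handled. The paper works locally in a unit cylinder $Q_1$ with a cutoff $\zeta$ vanishing near the parabolic boundary, tests with $\zeta^2 v^\varepsilon$, and expands $D(v,\zeta^2 v)$ into the seminorm of $\zeta v$ plus a commutator term controlled by $\int v^+$; you instead subtract a smooth extension $g^\varepsilon$ of the exterior datum so that $\tilde v^\varepsilon$ vanishes in $\Omega^c$ and test globally with $\tilde v^\varepsilon$. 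Your route is slightly cleaner in that it gives the global statement on $\Omega\times(0,T]$ directly, at the cost of having to track the extra forcing $\beta'_\varepsilon g^\varepsilon$ and needing a Gr\"onwall step; the paper's cutoff route avoids the subtraction but is inherently local and would need a covering to recover the stated global conclusion.

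One small point to tighten: your justification that $F^\varepsilon\in L^2_{t,x}$ uniformly is a bit muddled (you invoke an $L^\infty$ bound on $\beta_\varepsilon$ and then speak of ``$L^1_{t,x}$ control''). For the specific penalization in this paper, $\beta'_\varepsilon$ itself is uniformly bounded, so $\beta'_\varepsilon g^\varepsilon\in L^\infty$ uniformly and the $L^2$ bound on $F^\varepsilon$ is immediate; alternatively, since $v^\varepsilon$ is a priori in $L^\infty$, pairing $F^\varepsilon$ against $\tilde v^\varepsilon$ only needs $L^1$ control of $F^\varepsilon$. Either way the step goes through, but you should state clearly which bound you are using.
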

\begin{proof}
Penalize the problem \eqref{statem} and look at the derived equation \eqref{derprob}. Choose a smooth cutoff function $\zeta$ vanishing near the parabolic boundary of $Q_1:=B_1\times (-1,0]$. 
 Therefore in the weak formulation of \eqref{derprob} with $\eta:=\zeta^2(v^\varepsilon)$ as a test function we have
\begin{equation}\label{hlmaineq}
\int\limits_{-1}^0\int\limits_{\R^n}(\mathcal{L}v^\varepsilon-\partial_t v^\varepsilon)\zeta^2v^\varepsilon dxdt=\int\limits_{-1}^0 \int\limits_{\R^n}(\beta'_\varepsilon(u^\varepsilon
-\psi^\varepsilon)v^\varepsilon+f_t)\zeta^2v^\varepsilon dxdt.
\end{equation}
For simplicity, we drop the  $\varepsilon$ superscript. Using the nonnegativity of the first term on the right hand side we arrive at the following: 
\begin{equation}\label{hlineq1}
\int\limits_{-1}^0\Bigg[\frac{1}{2}D(v,\zeta^2v)+\int\limits_{\R^n}((\partial_tv)(\zeta^2v))(x,t)dx\Bigg]dt\leq -\int\limits_{-1}^0\int\limits_{\R^n}f_t\zeta^2vdxdt  
\end{equation}
where
$$D(u,v):=\iint\limits_{\R^{2n}}(u(x,t)-u(y,t))K(x,y,t)((v(x,t)-v(y,t))dydx.$$

Then, we write
$$D(v,\zeta^2v)=\iint\limits_{\R^{2n}}K(x,y,t)\Big[(\zeta v)^2(x,t)+(\zeta v)^2(y,t)-(\zeta^2(x,t)+\zeta^2(y,t))v(y,t)v(x,t)\Big])dydx$$
$$\ \ \ \ \ \ \ \ \ \ \ \ \ \ \ \ \ =\iint\limits_{\R^{2n}}K(x,y,t)\Big[\big((\zeta v)(x,t)-(\zeta v)(y,t)\big)^2-(\zeta(x,t)-\zeta(y,t))^2v(y,t)v(x,t)\Big])dydx$$ 
\begin{equation}\label{hleq2}
=\iint\limits_{\R^{2n}}K(x,y,t)\big((\zeta v)(x,t)-(\zeta v)(y,t)\big)^2dydx-\iint\limits_{\R^{2n}}K(x,y,t)(\zeta(x,t)-\zeta(y,t))^2v(y,t)v(x,t))dydx
\end{equation}

The second term of (\ref{hlineq1}) becomes
\begin{equation}\label{hleq4}
\int\limits_{-1}^0\int\limits_{\R^n}((\partial_tv)(\zeta^2v))(x,t)dxdt=\frac{1}{2}\int\limits_{-1}^0\int\limits_{\R^n}\partial_t((\zeta v)^2)dxdt-\int\limits_{-1}^0\int\limits_{\R^n}\zeta\zeta_tv^2dxdt
\end{equation}

Inserting (\ref{hleq2}) and (\ref{hleq4}) into (\ref{hlineq1}) and rearranging properly we obtain
$$\frac{1}{2}\int\limits_{-1}^0\iint\limits_{\R^{2n}}K(x,y,t)\big((\zeta v)(x,t)-(\zeta v)(y,t)\big)^2dydxdt+\frac{1}{2}\int\limits_{-1}^0\int\limits_{\R^n}\partial_t((\zeta v)^2)dxdt\ \ \ \ \ \ \ \ \ \ \ \ \ \ \ \ \ \ \ \ \ \ \ \ \ \ \ $$
$$\ \ \ \ \ \ \ \ \leq\int\limits_{-1}^0\iint\limits_{\R^{2n}}K(x,y,t)(\zeta(x,t)-\zeta(y,t))^2v(y,t)v(x,t)dydxdt+\int\limits_{-1}^0\int\limits_{\R^n}\zeta\zeta_tv^2dxdt-\int\limits_{-1}^0\int\limits_{\R^n}f_t\zeta^2vdxdt$$
or
$$\sup_{-1\leq t\leq 0}\int\limits_{\R^n}(\zeta v)^2dx+\int\limits_{-1}^0\iint\limits_{\R^{2n}}\frac{\big((\zeta v)(x,t)-(\zeta v)(y,t)\big)^2}{|x-y|^{n+\alpha}}dxdydt \ \ \ \ \ \ \ \ \ \ \ \ \ \ \ \ \ \ \ \ \ \ \ \ \ \ \ \ \ \ \ \ \ \ \ \ \ \ \ \ \ \ \ \ \ \ \ \ \ \ \ \ \ \ \ \ \ \ \ \ \ \ \ \ \ \ \ \   $$
$$ \ \ \ \ \ \ \ \ \ \ \ \ \ \  \leq \bigg[\int\limits_{-1}^0\iint\limits_{\R^{2n}}K(x,y,t)(\zeta(x,t)-\zeta(y,t))^2v(y,t)v(x,t)dydxdt$$
\begin{equation}\label{hlineq2}
\ \ \ \ \ \ \ \ \ \ \ \ \ \ \ \ \ \ \ \ \ \ \ \ \ \ \ \ \ \ \ \ \ \ \ \ \ \ \ \ \ \ \ \ \ \ \ \ \ \ \ \ \ \ \ \ \ \ \ \ \ \ \ \ \ \ \ \ \ \ \ \ \ \ \ \ \ \ \ \ \ \ \ \ \ \ \ \ +2\int\limits_{-1}^0\int\limits_{\R^n}\zeta\zeta_tv^2dxdt+2\int\limits_{-1}^0\int\limits_{\R^n}\zeta^2|f_tv|dxdt\bigg]  
\end{equation}
where the constant $C$ depends on $\Lambda$ and $n$.
 
Now, the first term on the right above due to its $xy$-symmetry can be written as
$$\int\limits_{-1}^0\iint\limits_{\R^{2n}}K(x,y,t)(\zeta(x,t)-\zeta(y,t))^2v^+(y,t)v^+(x,t))dydxdt\ \ \ \ \ \ \ \ \ \ \ \ \ \ \ \ \ \ \ \ \ \ \ \ \ \ \ \ \ \ \ \ \ \ \ \ \ \ \ \ \ \ \ \ \ \ \ \ \ \ $$
$$\ \ \ \ \ \ \ \ \ \ \ \ \ \ \ \ \ \ \ \ \ \ \ \ \ \ \ \ \ \ \ \ \ \ \ \ \ \ \ \ \ \ \ =2\int\limits_{-1}^0\int\limits_{B_1}\int\limits_{\R^{n}}K(x,y,t)(\zeta(x,t)-\zeta(y,t))^2v^+(x,t)v^+(y,t)dxdydt$$
$$\ \ \ \ \ \ \ \ \ \ \ \ \ \ \ \ \ \ \ \ \ \ \ \ \ \ \ \ \ \ \ \ \ \ \ \ \ \leq 2M\Lambda\int\limits_{-1}^0\int\limits_{B_1}\bigg(\int\limits_{\R^n}\frac{(\zeta(x,t)-\zeta(y,t))^2}{|x-y|^{n+\alpha}}dy\bigg)v^+(x,t)dxdydt   $$
$$\leq C(\Lambda,M,n)\int\limits_{-1}^0\int\limits_{B_1}\bigg(\int\limits_{|y-x|>\frac{1}{2}}\frac{2}{|x-y|^{n+\alpha}}dy+\int\limits_{|x-y|\leq\frac{1}{2}}
\frac{(|\grad\zeta(x+s_0(y-x))||x-y||)^2}{|x-y|^{n+\alpha}}dy\bigg)v^+(x,t)dxdt   $$
\begin{equation}\label{hlineq3}
\ \ \ \ \ \ \ \leq C(\Lambda,n,\alpha)\int\limits_{-1}^0\int\limits_{B_1}v^+(x,t)dxdt.
\end{equation}
\end{proof}

In the proof of the Theorem \ref{hthm} below the involvement of the fundamental solution $G$ of our nonlocal operator is crucial. The existence of 
$G$, as well as the corresponding bounds for our case \eqref{Operator} and \eqref{Kbounds} were obtained by Chen and Kumagai in \cite{ChK} (see also Bass and Levin in \cite{BL}). 

The fundamental solution with pole at $(0,0)$ satisfies the two-sided estimates 
$$C^{-1}\Big(t^{-\frac{n}{\alpha}}\wedge \frac{t}{|x|^{n+\alpha}}\Big)\leq G(x,t)\leq C\Big(t^{-\frac{n}{\alpha}}\wedge \frac{t}{|x|^{n+\alpha}}\Big) $$
and the scaling property
$$G(x,t)=t^{-\frac{n}{\alpha}}G\Big(t^{-\frac{1}{\alpha}}x,1\Big).$$
\begin{defn}\label{density} A free boundary point $(x_0,t_0)$ is of parabolic positive density with respect to the coincidence set if there exist positive constants $c>0$ and $r_0>0$ such that $|Q_r(x_0,t_0)\cap\{u=0\}|\geq c|Q_r(x_0,t_0)|$ for every $r<r_0$.
\end{defn}
\begin{thm}\label{hthm} Let $(x_0,t_0)$ be a free boundary point of positive parabolic density with respect to the coincidence set to problem \eqref{derprob}. Then $u_t$ is H\"{o}lder continuous in a neighborhood of $(x_0,t_0)$.
\end{thm}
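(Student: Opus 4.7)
I would work with the penalized problem and its derived equation~\eqref{derprob}, aiming for a uniform (in $\varepsilon$) H\"older modulus for $v^\varepsilon=\partial_t(u^\varepsilon-\psi^\varepsilon)$ near $(x_0,t_0)$, and then pass to the limit; since $u_t=v+\psi_t$ with $\psi$ smooth this is equivalent to the stated claim. By Proposition~\ref{prop1} the positive part $(v^\varepsilon)^+$ already has a uniform modulus of continuity, so the real task is to control the negative part $v^-$, whose jump across the free boundary is precisely the mechanism that generically destroys the H\"older regularity of $u_t$.

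\textbf{Role of positive density.} On the interior of the coincidence set one has $u\equiv\psi$ throughout a parabolic neighborhood, so $v\equiv 0$ there. Together with Definition~\ref{density} this gives, after translating $(x_0,t_0)$ to the origin,
$$|\{v=0\}\cap Q_r|\ \geq\ c\,|Q_r|\qquad\text{for every }r<r_0,$$
so $v$ vanishes on a set of positive past density near $(0,0)$. This substitutes for a barrier from below, and it is exactly what will prevent $v^-$ from jumping.

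\textbf{Key step: oscillation decay.} After the $\alpha$-parabolic rescaling $\bar v(x,t):=v(rx,r^\alpha t)$ (under which both~\eqref{derprob} and the kernel bounds~\eqref{Kbounds} are preserved), and normalizing $|\bar v|\leq 1$ in $Q_1$, I would prove
$$\osc_{Q_{1/2}}\bar v\ \leq\ (1-\theta)\,\osc_{Q_1}\bar v\ +\ C r^\alpha$$
for some $\theta=\theta(c,n,\alpha,\Lambda)\in(0,1)$, where the $Cr^\alpha$ absorbs the bounded forcing $f_t$ after rescaling. The mechanism is the one announced by the explicit invocation of the Chen--Kumagai fundamental solution $G$: using Duhamel's representation for $\mathcal{L}-\partial_t$ one compares $\bar v$ with the convolution $G\ast\mathbf{1}_{\{v=0\}\cap Q_1}$ and applies the two-sided lower bound $G(x,t)\geq C^{-1}(t^{-n/\alpha}\wedge t/|x|^{n+\alpha})$ to conclude that this convolution is $\geq\kappa(c)>0$ throughout $Q_{1/2}$. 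That pointwise lower barrier gives the one-sided gain for $v^-$; applying the same argument to $1-\bar v$ produces the matching upper gain, hence the oscillation decay. Iterating on $r_k=2^{-k}$ then delivers the H\"older modulus for $\bar v$, with constants independent of $\varepsilon$.

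\textbf{Main obstacle.} The genuine technical difficulty is combining the fundamental-solution argument with the nonlocal tails of $v$: at each scale one must control the contribution to $\mathcal{L}\bar v$ coming from $|y|\geq 1$, which at the $k$-th iteration depends on $v$ on all of $\R^n$. The standard remedy is to split the kernel into a near-field part (dominated pointwise via the Chen--Kumagai bounds, with the integrability inputs supplied by Lemma~\ref{hlem1}) and a far-field part bounded by a convergent geometric series in $\osc_{Q_{2^j}}\bar v$; the oscillation decay then bootstraps against this series. Making this splitting quantitative while keeping every constant uniform in $\varepsilon$ is the main technical burden. Once this is achieved, passing $\varepsilon\to 0$ and adding back the smooth $\psi_t$ yields the claimed H\"older continuity of $u_t$ in a neighborhood of $(x_0,t_0)$.
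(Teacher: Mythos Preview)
Your overall scaffolding (penalize, prove a uniform oscillation decay for $v^\varepsilon$ at $(x_0,t_0)$, pass to the limit, treat $v^+$ and $v^-$ symmetrically) matches the paper. But the mechanism you propose for the decay is \emph{different} from the paper's, and at the crucial step it is not yet a proof.

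\textbf{What the paper actually does.} The fundamental solution $G$ is not used through a Duhamel representation; it is used as a \emph{weight} in an energy estimate. One tests~\eqref{derprob} against $\eta=\zeta^2 G_\delta^{(\xi,\tau)}(v^\varepsilon)^+$, where $G_\delta^{(\xi,\tau)}$ is a smoothed backward fundamental solution with pole at $(\xi,\tau)\in Q_{1/5}$. The $\beta'$ term then produces $\int \beta'(u-\psi)\,\zeta^2 G\,(v^+)^2\geq 0$ and is simply discarded---this is exactly why the estimate is uniform in $\varepsilon$ despite $\beta'_\varepsilon$ being unbounded. After algebraic manipulation of the bilinear form $D(v,\zeta^2 G v^+)$ and sending $\delta\to 0$, one obtains control of
\[
\omega(\rho):=\sup_{Q_\rho}(v^+)^2+\int_{-\rho^\alpha}^0\iint_{B_\rho\times B_\rho}G(x,-t)\,\frac{(v^+(x,t)-v^+(y,t))^2}{|x-y|^{n+\alpha}}\,dy\,dx\,dt.
\]
The positive-density hypothesis enters through a \emph{Poincar\'e inequality}: since $v^+\equiv 0$ on a set of definite measure in each $Q_r$ (Definition~\ref{density}), one has $\int (v^+)^2\leq C\iint |v^+(x)-v^+(y)|^2|x-y|^{-n-\alpha}$ on the relevant annular region; the $H^{\alpha/2}$ regularity required here is Lemma~\ref{hlem1}. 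This closes to $\omega(\tfrac{1}{5})\leq \lambda\,\omega(1)+M$ with $\lambda<1$, hence $\omega(\rho)\leq C\rho^\gamma$; the same argument with $v^-$ finishes.

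\textbf{Where your sketch has a gap.} Your ``compare $\bar v$ with $G\ast\mathbf{1}_{\{v=0\}\cap Q_1}$'' is the step that does not go through as written. That convolution solves $(\partial_t-\mathcal{L})w=\mathbf{1}_{\{v=0\}}$, whereas $v^\varepsilon$ solves $(\partial_t-\mathcal{L})v^\varepsilon=-\beta'_\varepsilon(u^\varepsilon-\psi^\varepsilon)v^\varepsilon-f_t$; there is no inequality relating the two right-hand sides that would feed a comparison principle, so the lower bound on the convolution says nothing directly about $v$. If instead you write Duhamel for $v^\varepsilon$ itself, the term $\beta'_\varepsilon(u^\varepsilon-\psi^\varepsilon)v^\varepsilon$ is not uniformly bounded in $\varepsilon$, so the representation gives no $\varepsilon$-independent control. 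Working at the limit avoids $\beta'$ but then the equation only holds off the (a priori irregular) coincidence set, and you have no PDE across the free boundary to run the barrier against. The paper's weighted-energy route sidesteps all of this because the bad term acquires a sign when paired with $(v^\varepsilon)^+$, and the density is exploited through Poincar\'e rather than through a pointwise barrier.
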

\begin{proof} For simplicity, we take $(x_0,t_0)=(0,0)$. We penalize the problem \eqref{statem} and consider the corresponding derived equation \eqref{derprob}.  By scaling, it suffices to carry out the estimates in $Q_1$. For any $(\xi,\tau)\in Q_{\frac{1}{5}}$, we multiply the equation by a suitable test function $\eta$ and integrate by parts over the set $Q_{\frac{3}{5}}(\xi,\tau)\subset Q_1$. The test function $\eta$ is a product of three functions i.e.
$$\eta(x,t):=\Big(\zeta^2 G_{\delta}^{(\xi,\tau)}(v^{\varepsilon})^+\Big)(x,t)$$
where $\zeta(x,t)$ is a smooth function supported in $Q_{\frac{3}{5}}(\xi,\tau)$ such that $\zeta\equiv 1$ for every $(x,t)\in Q_{\frac{2}{5}}(\xi,\tau)$, $|\grad\zeta|\leq c$ with $\supp (\grad\zeta)\subset\big(B_{\frac{3}{5}}(\xi,\tau)\setminus B_\frac{2}{5}(\xi,\tau)\big)\times(\tau-(\frac{3}{5})^\alpha,\tau]$ and $0\leq\zeta_t\leq c$ with $\supp (\zeta_t)\subset B_\frac{3}{5} (\xi,\tau)\times (\tau-(\frac{3}{5})^\alpha,\tau-(\frac{2}{5})^\alpha)$, the function $G_\delta^{(\xi,\tau)}(x,t)$ is a $\delta$ smoothing of the fundamental solution of our backward nonlocal equation with pole at $(\xi,\tau)$, i.e. a smooth function such that $G_\delta^{(\xi,\tau)}(x,t)=G(x-\xi.\tau-t)\mathbf{1}_{Q_\delta^c (\xi,\tau)} $ where ${Q_\delta^c (\xi,\tau)}:=\big(\R^n\times(-\infty,\tau]\big)\setminus Q_\delta(\xi,\tau)$. 

Therefore in the weak formulation of \eqref{derprob} we have
\begin{equation}\label{hmaineq}
\int\limits_{-1}^0\int\limits_{\R^n}(\mathcal{L}v^\varepsilon-\partial_t v^\varepsilon)\zeta^2G^{(\xi,\tau)}_\delta(v^\varepsilon)^+ dxdt=\int\limits_{-1}^0 \int\limits_{\R^n}(\beta'_\varepsilon(u^\varepsilon
-\psi^\varepsilon)v^\varepsilon+f_t)\zeta^2G^{(\xi,\tau)}_\delta(v^\varepsilon)^+dxdt.
\end{equation}
In the following calculations, we omit the dependence on 
$\varepsilon$, $\delta$, and $(\xi,\tau)$ for simplicity of notation, and we will restore it when necessary. Hence we obtain
\begin{equation}\label{hineq}
\int\limits_{-1}^0\Bigg[\frac{1}{2}D(v,\zeta^2Gv^+)+\int\limits_{\R^n}((\partial_tv)(\zeta^2Gv^+))(x,t)dx\Bigg]dt\leq -\int\limits_{-1}^0\int\limits_{\R^n}f_t\zeta^2Gv^+dxdt  
\end{equation}
where
$$D(u,v):=\iint\limits_{\R^{2n}}(u(x,t)-u(y,t))K(x,y,t)((v(x,t)-v(y,t))dydx.$$
Observe that
\begin{equation}\label{heq1}D(v,\zeta^2Gv^+)=D(v^+,\zeta^2Gv^+)+D(-v^-,\zeta^2Gv^+).
\end{equation} 
Now
$$D(v^+,\zeta^2Gv^+)=\iint\limits_{\R^{2n}}(v^+(x,t)-v^+(y,t))K(x,y,t)((\zeta^2Gv^+)(x,t)-(\zeta^2Gv^+)(y,t))dydx $$
$$=\iint\limits_{\R^{2n}}(v^+(x,t)-v^+(y,t))K(x,y,t)((\zeta^2Gv^+)(x,t)-G(x,t)(\zeta^2v^+)(y,t))dydx\ \ \ \ \ \ \ \ \   $$
$$\ \ \ \ \ \ \ \ \  +\iint\limits_{\R^{2n}}(v^+(x,t)-v^+(y,t))K(x,y,t)(G(x,t)(\zeta^2v^+)(y,t)-(\zeta^2Gv^+)(y,t))dydx    $$
$$=\iint\limits_{\R^{2n}}K(x,y,t)G(x,t)\Big[(\zeta v^+)^2(x,t)+(\zeta v^+)^2(y,t)-(\zeta^2(x,t)+\zeta^2(y,t))v^+(y,t)v^+(x,t)\Big])dydx\ \ \ \ \ \ \ \ \   $$
$$\ \ \ \ \ \ \ \ \ \ \ \ \ \ \ \ \ \ \ \ \ \  +\iint\limits_{\R^{2n}}(v^+(x,t)-v^+(y,t))K(x,y,t)(\zeta^2v^+)(y,t)(G(x,t)-G(y,t))dydx    $$
$$=\iint\limits_{\R^{2n}}K(x,y,t)G(x,t)\Big[\big((\zeta v^+)(x,t)-(\zeta v^+)(y,t)\big)^2-(\zeta(x,t)-\zeta(y,t))^2v^+(y,t)v^+(x,t)\Big])dydx\ \ \ \ \ \ \ \ \   $$
\begin{equation}\label{heq2}
\ \ \ \ \ \ \ \ \ \ \ \ \ \ \ \ \ \ \  +\iint\limits_{\R^{2n}}(v^+(x,t)(\zeta^2v^+)(y,t)-(\zeta v^+)^2(y,t))K(x,y,t)(G(x,t)-G(y,t))dydx.    
\end{equation} 
Also,
$$D(v^+,\zeta^2Gv^+)=\iint\limits_{\R^{2n}}(v^+(x,t)-v^+(y,t))K(x,y,t)((\zeta^2Gv^+)(x,t)-(\zeta^2Gv^+)(y,t))dydx $$

$$=\iint\limits_{\R^{2n}}(v^+(x,t)-v^+(y,t))K(x,y,t)((\zeta^2Gv^+)(x,t)-G(y,t)(\zeta^2v^+)(x,t))dydx\ \ \ \ \ \ \ \ \   $$
$$\ \ \ \ \ \ \ \ \  +\iint\limits_{\R^{2n}}(v^+(x,t)-v^+(y,t))K(x,y,t)(G(y,t)(\zeta^2v^+)(x,t)-(\zeta^2Gv^+)(y,t))dydx    $$
$$=\iint\limits_{\R^{2n}}(v^+(x,t)-v^+(y,t))K(x,y,t)(\zeta^2 v^+)(x,t)(G(x,t)-G(y,t))dydx\ \ \ \ \ \ \ \ \ \ \ \ \ \ \ \ \ \ \ \ \ \ \ \ \   $$
$$\ \ \ \ \ \ \ \ \  +\iint\limits_{\R^{2n}}K(x,y,t)G(y,t)\Big[(\zeta v^+)^2(x,t)+(\zeta v^+)^2(y,t)-(\zeta^2(x,t)+\zeta^2(y,t))v^+(y,t)v^+(x,t)\Big]dydx    $$
$$=\iint\limits_{\R^{2n}}((\zeta v^+)^2(x,t)-(\zeta^2 v^+)(x,t)v^+(y,t))K(x,y,t)(G(x,t)-G(y,t))dydx\ \ \ \ \ \ \ \ \ \ \ \ \ \ \ \ \ \ \ \ \ \ \ \ \   $$
\begin{equation}\label{heq3}
\ \ \ \ \ \ \ \ \  +\iint\limits_{\R^{2n}}K(x,y,t)G(y,t)\Big[((\zeta v^+)(x,t)-(\zeta v^+)(y,t))^2-(\zeta(x,t)-\zeta(y,t))^2v^+(y,t)v^+(x,t)\Big]dydx.
\end{equation}
Adding (\ref{heq2}) and (\ref{heq3}) and using the symmetry of $K$ we have
$$D(v^+,\zeta^2Gv^+)=\iint\limits_{\R^{2n}}K(x,y,t)G(x,t)((\zeta v^+)(x,t)-(\zeta v^+)(y,t))^2dydx\ \ \ \ \ \ \ \ \ \ \ \ \ \ \ \ \ \ \ \ \ \ \ \ \ \ \     $$
$$\ \ \ \ \ \ \ \ \ \ \ \ \ \ \ \ \ \ \ \ \ \ \ \ -\iint\limits_{\R^{2n}}K(x,y,t)G(x,t)(\zeta(x,t)-\zeta(y,t))^2v^+(y,t)v^+(x,t)dydx   $$
$$+ \frac{1}{2}\iint\limits_{\R^{2n}}((\zeta v^+)^2(x,t)-(\zeta v^+)^2(y,t))K(x,y,t)(G(x,t)-G(y,t))dydx\ \ \ \ \ \ \ \ \ \ \ \ \ \ \ \ \ \ \ \ \ \ \ \ \ \    $$
$$\ \ \ \ \ \ \ \ \ \ \ \ \ \ \ \ \ \ \ \ \ \ \ \ \ \ \ $$
\begin{equation}\label{heq4}
\ \ \ \ \ \ \ \ \ \ \ \ \ \ \ \ \ \ \ \ \ \ \ \ \ \ -\frac{1}{2}\iint\limits_{\R^{2n}} (\zeta^2 (x,t)-\zeta^2(y,t)) v^+(x,t) v^+(y,t))K(x,y,t)(G(x,t)-G(y,t))dydx. 
\end{equation}
The second term in (\ref{heq1})  is
$$D(-v^-,\zeta^2Gv^+)=-\iint\limits_{\R^{2n}}(v^-(x,t)-v^-(y,t))K(x,y,t)((\zeta^2Gv^+)(x,t)-(\zeta^2Gv^+)(y,t))dydx$$
$$\ \ \ \ \ \ \ \ \ \ \ \ \ \ \ \ =-\iint\limits_{\R^{2n}}\Big[v^-(x,t)(\zeta^2Gv^+)(x,t)+v^-(y,t)(\zeta^2Gv^+)(y,t)\Big]K(x,y,t)dydx$$
$$\ \ \ \ \ \ \ \ \ \ \ \ \ \ \ \ \ \ \ \ \ \ \ \ \ \ \ \ \ \ \ \ \ \ \ \ \ \ \ \ \ \ +\iint\limits_{\R^{2n}}\Big[v^-(y,t)(\zeta^2Gv^+)(x,t)+v^-(x,t)(\zeta^2Gv^+)(y,t)\Big]K(x,y,t)dydx.   $$
Since the first integral above is zero and the second one is nonnegative, the term $D(-v^-,\zeta^2Gv^+)\geq0$ and will be ignored in the inequality (\ref{hineq}).

The second term of the left hand side of (\ref{hineq})
$$\int\limits_{-1}^0\int\limits_{\R^n}((\partial_tv)(\zeta^2Gv^+))(x,t)dxdt=\int\limits_{-1}^0\int\limits_{\R^n}G\zeta v^+(\partial_t(\zeta v^+)-v^+\partial_t\zeta)dxdt\ \ \ \ \ \ \ \ \ \ \ \ \ \ \ \ \ \ \ \ \ \ \ \ \ \  $$
\begin{equation}\label{heq5}
\ \ \ \ \ \ \ \ \ \ \ \ \ \ \ \ \ \ \ \ \ \ \ \ \ \ \ \ \ \ \ \ \ \ \ \ \ \ \ \ =\int\limits_{-1}^0\int\limits_{\R^n}\frac{1}{2}G\partial_t\Big((\zeta v^+)^2\Big)dxdt-\int\limits_{-1}^0\int\limits_{\R^n}\zeta\zeta_t (v^+)^2Gdxdt.
\end{equation}

Inserting (\ref{heq4}) and (\ref{heq5}) into inequality (\ref{hineq}) and keeping the terms with $+$ sign on the left hand side of the inequality and the ones with $-$ sign on the right hand side we obtain
$$\int\limits_{-1}^0\iint\limits_{\R^{2n}}K(x,y,t)G(x,t)((\zeta v^+)(x,t)-(\zeta v^+)(y,t))^2dydxdt$$ 
$$+\frac{1}{4}\int\limits_{-1}^0\iint\limits_{\R^{2n}}((\zeta v^+)^2(x,t)-(\zeta v^+)^2(y,t))K(x,y,t)(G(x,t)-G(y,t))dydx+\frac{1}{2}\int\limits_{-1}^0\int\limits_{\R^n}G\partial_t\Big((\zeta v^+)^2\Big)dxdt   $$
$$\ \ \ \ \ \ \ \ \ \leq\int\limits_{-1}^0\iint\limits_{\R^{2n}}K(x,y,t)G(x,t)(\zeta(x,t)-\zeta(y,t))^2v^+(y,t)v^+(x,t)dydxdt$$
$$\ \ \ \ \ \ \ \ \ \ \ \ \ \ \ \ \ \ \ \ \ \ \ \ \ \ \ \ \ \ +\frac{1}{4}\int\limits_{-1}^0\iint\limits_{\R^{2n}} (\zeta^2 (x,t)-\zeta^2(y,t)) v^+(x,t) v^+(y,t))K(x,y,t)(G(x,t)-G(y,t))dydxdt$$ 
\begin{equation}\label{hineq1}
+\int\limits_{-1}^0\int\limits_{\R^n}\zeta\zeta_t (v^+)^2Gdxdt -\int\limits_{-1}^0\int\limits_{\R^n}f_t\zeta^2Gv^+dxdt.\ \ \ \ \ \ \ \ \ \ \ \ \ \ \ \ \     \end{equation}
Using the lower bound of the kernel $K$, the properties of the cut-off function $\zeta$ and the fact that $G$ is the smoothing of the fundamental solution of our nonlocal equation the left-hand of \eqref{heq1} reduces to
$$\frac{1}{\Lambda}\int\limits_{\tau-(\frac{1}{5})^\alpha}^\tau\int\limits_{B_{\frac{1}{5}}(\xi)}\int\limits_ {B_{\frac{1}{5}}(\xi)}G(x,t)\frac{((v^+)(x,t)-(v^+)(y,t))^2}{|x-y|^{n+\alpha}}dydxdt+\fint_{Q_\delta (\xi,\tau)}(v^+)^2dxdt\ \ \ \ \ \ \ \ \ \ \ \ \ \ \ \ \ \ \ \ \ \ \ \ \ \ \   $$
$$\ \ \ \ \ \ \ \leq 2\int\limits_{-1}^0\iint\limits_{\R^{2n}}K(x,y,t)G(x,t)(\zeta(x,t)-\zeta(y,t))^2v^+(y,t)v^+(x,t)dydxdt$$
$$\ \ \ \ \ \ \ \ \ \ \ \ \ \ \ \ \ \ \ \ \ \ \ +\frac{1}{2}\int\limits_{-1}^0\iint\limits_{\R^{2n}} (\zeta^2 (x,t)-\zeta^2(y,t)) v^+(x,t) v^+(y,t))K(x,y,t)(G(x,t)-G(y,t))dydxdt$$ 
\begin{equation}\label{hineq2}
+2\int\limits_{-1}^0\int\limits_{\R^n}\zeta\zeta_t (v^+)^2Gdxdt-2\int\limits_{-1}^0\int\limits_{\R^n}f_t\zeta^2Gv^+dxdt.\ \ \ \ \ \ \ \ \ \ \ \ \ \ \ \ \ \ 
\end{equation} 

Now we estimate the four terms on the right-hand side of \eqref{hineq2}.
We will use the bounds of the fundamental solution, namely
$0\leq G_\delta^{(\xi,\tau)}\leq C(n,\alpha)$ in $(B_\frac{4}{5}(\xi)\setminus B_\frac{1}{5})(\xi)\times(\tau-(\frac{2}{5})^\alpha,\tau)$, and $c(n,\alpha)\leq G_\delta^{(\xi,\tau)}\leq C(n,\alpha)$ in $B_\frac{4}{5}(\xi)\times(\tau-(\frac{3}{5})^\alpha,\tau-(\frac{2}{5})^\alpha)$.  The first term
$$2\int\limits_{-1}^0\iint\limits_{\R^{2n}}G(x,t)K(x,y,t)
(\zeta(x,t)-\zeta(y,t))^2v^+(x,t)v^+(y,t)dydxdt\ \ \ \ \ \ \ \ \ \ \ \ \ \ \ \ \ \ \ \ \ \ \ \ \ \ \ \ \ \ \ \ \ \ \ \ \ \ $$
$$\leq\int\limits_{-1}^0\iint\limits_{\R^{2n}}G(x,t)K(x,y,t)
(\zeta(x,t)-\zeta(y,t))^2\Big[(v^+(x,t))^2+(v^+(y,t))^2\Big]dydxdt\ \ \ \ \ \ \ \ $$
$$ \ \ \ \ \ \ \ \ \ \ \ \ \leq 2C(n,\alpha)\int\limits_{\tau-(\frac{3}{5})^\alpha}^\tau\int\limits_{B_\frac{3}{5}(\xi)\setminus B_\frac{1}{5}(\xi)}(v^+(x,t))^2\int\limits_{B_\frac{4}{5}\setminus B_\frac{1}{5}}(\zeta(x,t)-\zeta(y,t))^2K(x,y,t)dydxdt\ \ \ \ \ \ \ \ \ \ \ \ \ \ $$
$$\ \ \ \ \ \ \ \ \ \ \ \ \ \ \ \ \ \ \leq 2C(n,\alpha)\Lambda\int\limits_{\tau-(\frac{3}{5})^\alpha}^\tau\int\limits_{B_\frac{3}{5}(\xi)\setminus B_\frac{1}{5}(\xi)}(v^+(x,t))^2\bigg(\int\limits_{|x-y|\leq\frac{1}{2}}
\frac{|\grad\zeta(x+s_0(y-x)\cdot(y-x)|^2}{|x-y|^{n+\alpha}}dy\bigg)dxdt\ \ \ \  $$ 
$$\ \ \ \ \ \ \ \ \ \ \ \ \ \ \ \ \ \ \ \ \ \ \ \ \ \ \ \ \ \ \ \ \ \ \ \ \ \ \ \ \ \ \ \ \ \ \ \ \ \ \ \ \ \ \ \ \ \ \ \ \ \ \ \ \ \ \ \ \ \ \ \ \ \ \ \ \ \ \ \ \ \ \ \ \  \ \ \ \ \ \ \ \ \ \ \ \ \ \ \ \ \ \ \ \ \ \ \text{for} \ \ s_0\in(0,1)$$
\begin{equation}\label{hineq3}\leq 2C(n,\alpha)\Lambda\int\limits_{\tau-(\frac{3}{5})^\alpha}^\tau\int\limits_{B_\frac{3}{5}(\xi)\setminus B_\frac{1}{5}(\xi)}((v^+)(x,t))^2dx.\ \ \ \ \ \ \ \ \ \ \ \ \ \ \ \ \ \ \ \ \ \ \ \ \ \ \ \ \ \ \ \ \ \ \ \ \ \ \  
\end{equation}
For the second term on the right hand side of (\ref{hineq2}) we have 
$$\frac{1}{2}\int\limits_{-1}^0\iint\limits_{\R^{2n}} (\zeta^2 (x,t)-\zeta^2(y,t)) v^+(x,t) v^+(y,t))K(x,y,t)(G(x,t)-G(y,t))dydxdt\ \ \ \ \ \ \ \ \ \ \ \ \ \ \ \ \ \ \ \ \ \ \ \ \ \ \ $$
$$\leq\frac{1}{4}\int\limits_{-1}^0\iint_{\R^{2n}}(\zeta^2(x,t)-\zeta^2(y,t))((v^+(x,t))^2+(v^+(y,t))^2)K(x,y,t)(G(x,t)-G(y,t))dydxdt\ \ $$
$$\ \ \ \ \ =\frac{1}{2}\int\limits_{\tau-(\frac{3}{5})^\alpha}^\tau\int\limits_{B_{\frac{3}{5}}(\xi)\setminus B_{\frac{1}{5}}(\xi)}(v^+(x,t))^2\int\limits_{B_{\frac{4}{5}}(\xi)\setminus B_{\frac{1}{5}}(\xi)}(\zeta^2(x,t)-\zeta^2(y,t))K(x,y,t)(G(x,t)-G(y,t))dydxdt   $$
$$\leq\int\limits_{\tau-(\frac{3}{5})^\alpha}^\tau\int\limits_{B_{\frac{3}{5}}(\xi)\setminus B_{\frac{1}{5}}(\xi)}(v^+(x,t))^2\int\limits_{B_{\frac{4}{5}}(\xi)\setminus B_{\frac{1}{5}}(\xi)}(\zeta(x,t)-\zeta(y,t))K(x,y,t)(G(x,t)-G(y,t))dydxdt$$
$$\leq C(n,\alpha)\Lambda\int\limits_{\tau-(\frac{3}{5})^\alpha}^\tau\int\limits_{B_{\frac{3}{5}}(\xi)\setminus B_{\frac{1}{5}}(\xi)}(v^+(x,t))^2\ \ \ \ \ \ \ \ \ \ \ \ \ \ \ \ \ \ \ \ \ \ \ \ \ \ \ \ \ \ \ \ \ \ \ \ \ \ \ \ \ \ \ \ \ \ \ \ \ \ \ \ \ \ \ \ \ \ \ \ \ \ \ \ \ \ \ $$
$$\ \ \ \ \ \ \ \ \ \ \ \ \ \ \ \ \ \ \ \ \ \ \ \ \ \ \ \ \ \ \ \ \ \ \ \ \ \ \ \ \ \ \ \ \ \ \ \ \ \ \ \ \ \ \times\int\limits_{B_{\frac{4}{5}}(\xi)\setminus B_{\frac{1}{5}}(\xi)}\frac{(\grad\zeta(x+s_0(y-x))\cdot(y-x)(\grad G(x+s_1(y-x))\cdot(y-x)}{|x-y|^{n+\alpha}}dydxdt$$
$$\leq C(n,\alpha)\Lambda\int\limits_{\tau-(\frac{3}{5})^\alpha}^\tau\int\limits_{B_{\frac{3}{5}}(\xi)\setminus B_{\frac{1}{5}}(\xi)}(v^+(x,t))^2\int\limits_{B_{\frac{4}{5}}(\xi)\setminus B_{\frac{1}{5}}(\xi)}\frac{|y-x|^2}{|x-y|^{n+\alpha}}dydxdt\ \ \ \ \ \ \ \ \ \ \ \ \ \ \ \ \ \ \ \ \ \ \ \ \ \ \ \ $$
\begin{equation}\label{hineq4}
\leq C(n,\alpha)\Lambda\int\limits_{\tau-(\frac{3}{5})^\alpha}^\tau\int\limits_{B_\frac{3}{5}(\xi)\setminus B_{\frac{1}{5}}(\xi)}(v^+(x,t))^2dxdt.\ \ \ \ \ \ \ \ \ \ \ \ \ \ \ \ \ \ \ \ \ \ \ \ \ \ \ \ \ \ \ \ \ \ \ \ \ \ \ \ \ \ \ \ \ \ \ \ \ \ \ \  
\end{equation}
The third term on the right hand of (\ref{hineq2}) becomes
$$2\int\limits_{-1}^0\int\limits_{\R^n}\zeta\zeta_t (v^+)^2Gdxdt\leq2C(n,\alpha)\int\limits_{-\frac{2}{5}}^{-\frac{4}{25}}\int\limits_{B_\frac{4}{5}}(v^+(x,t))^2dxdt$$
and, finally, the fourth term
$$-2\int\limits_{-1}^0\int\limits_{\R^n}f_t\zeta^2Gv^+dxdt\leq C(n,\alpha)M.$$

Substituting the estimates just obtained into (\ref{hineq2}) we have
$$\int\limits_{\tau-(\frac{1}{5})^\alpha}^\tau\int\limits_{B_{\frac{1}{5}}(\xi)}\int\limits_ {B_{\frac{1}{5}}(\xi)}G^{(\xi,\tau)}_\delta(x,t)\frac{((v^\varepsilon)^+(x,t)-(v^\varepsilon)^+(y,t))^2}{|x-y|^{n+\alpha}}dydxdt+\fint_{Q_{\delta}(\xi,\tau)}((v^\varepsilon)^+)^2dxdt\ \ \ \ \ \ \ \ \ \ \ \ \ \ \ \ \ \ \ \ \ \ \ \ \ \ \ \ \ \ \ \ \ \ \ \ \ \ \ \ \ \ \ \ \ $$ 
\begin{equation}\label{hineq5}\ \ \ \ \ \ \ \ \ \ \ \ \ \ \ \ \ \ \ \ \ \ \ \ \ \ \ \ 
\leq C(n,\alpha,\Lambda)(\int\limits_{-(\frac{3}{5})^\alpha}^0\int\limits_{B_\frac{3}{5}\setminus B_{\frac{1}{5}}}((v^\varepsilon)^+(x,t))^2dxdt+\int\limits_{-(\frac{3}{5})^\alpha}^{-(\frac{2}{5})^\alpha}\int\limits_{B_\frac{4}{5}}((v^\varepsilon)^+(x,t))^2dxdt+M).
\end{equation}
Now, we first let $\varepsilon$ go to $0$ to obtain \eqref{hineq5} for $v^+$, then we $\delta$ to go to $0$, and finally we take the supremum over $(\xi,\tau)\in Q^+_\frac{1}{5}$ to obtain
$$\sup_{Q_\frac{1}{5}}\  (v^+)^2+\int\limits_{(\frac{1}{5})^\alpha}^0\int\limits_{B_\frac{1}{5}}\int\limits_{B_\frac{1}{5}}G(x,-t)\frac{((v^+)(x,t)-(v^+)(y,t))^2}{|x-y|^{n+\alpha}}dydxdt \ \ \ \ \ \ \ \ \ \ \ \ \ \ \ \ \ \ \ \ \ \ \ \ \ \ \ \ \ \ \ \ \ \ \ \ \ \ \ \ \ \ \ \ \ \ \ \ \ \ \ \ \ \ \ \ \ \ \ \ \ \  $$
$$\ \ \ \ \ \ \ \ \ \ \ \ \ \ \ \ \ \ \ \ \ \ \ \ \ \ \ \ \ \ \ \leq C(n,\alpha,\Lambda)(\int\limits_{-(\frac{3}{5})^\alpha}^0\int\limits_{B_\frac{3}{5}\setminus B_{\frac{1}{5}}}(v^+(x,t))^2dxdt+\int\limits_{-(\frac{3}{5})^\alpha}^{-(\frac{2}{5})^\alpha}\int\limits_{B_\frac{4}{5}}(v^+(x,t))^2dxdt+M)$$
\begin{equation}\label{hineq6}
 \ \ \ \ \ \ \ \ \ \ \ \ \ \ \leq C(n,\alpha,\Lambda)\bigg(\sup_{Q_1\setminus Q_{\frac{1}{5}}}(v^+)^2+\int\limits_{-(\frac{3}{5})^\alpha}^{-(\frac{2}{5})^\alpha}\int\limits_{B_\frac{4}{5}}(v^+(x,t))^2dxdt+M\bigg).  
\end{equation}
We next replace the integrand of the second term on the right-hand side of \eqref{hineq6} by an integrand of the first term on the left-hand side
of \eqref{hineq6}. By Lemma \ref{hlem1} and the density assumption we can apply Poincar\'{e} inequality (see \cite{FK} and \cite{FKV}) to have
\begin{equation}\label{hineq7}
\int\limits_{-(\frac{3}{5})^\alpha}^{-(\frac{2}{5})^\alpha}\int\limits_{B_\frac{4}{5}}(v^+(x,t))^2dxdt\leq C(n,\alpha)\int\limits_{-(\frac{3}{5})^\alpha}^{-(\frac{2}{5})^\alpha}\int\limits_{B_\frac{4}{5}}\int\limits_{B_\frac{4}{5}}\frac{((v^+)(x,t)-(v^+)(y,t))^2}{|x-y|^{n+\alpha}}dydxdt
\end{equation}
and since $G(x,-t)\geq c(n,\alpha)$ in the domain of integration we obtain
$$\ \ \ \ \ \ \ \ \ \ \ \ \ \ \ \ \ \ \ \ \ \ \ \ \ \ \ \ \ \ \ \ \ \ \ \  \leq C(n,\alpha)\int\limits_{-(\frac{3}{5})^\alpha}^{-(\frac{2}{5})^\alpha}\int\limits_{B_\frac{4}{5}}\int\limits_{B_\frac{4}{5}}G(x,-t)\frac{((v^+)(x,t)-(v^+)(y,t))^2}{|x-y|^{n+\alpha}}dydxdt.$$
Inserting the above in \eqref{hineq6} we finally have
$$\sup_{Q_\frac{1}{5}}\ (v^+)^2 +\int\limits_{(\frac{1}{5})^\alpha}^0\int\limits_{B_\frac{1}{5}}\int\limits_{B_\frac{1}{5}}G(x,-t)\frac{((v^+)(x,t)-(v^+)(y,t))^2}{|x-y|^{n+\alpha}}dydxdt\ \ \ \ \ \ \ \ \ \ \ \ \ \ \ \ \ \ \ \ \ \ \ \ \ \ \ \ \ $$ 
\begin{equation}\label{hineq8}\ \ \ \ \ \ \ \ \ \ \ \ \ \ \ \ \ \ \ \ \ \ \ \ \ \ \leq C(n,\alpha,\Lambda)\bigg(\sup_{Q_1\setminus Q_{\frac{1}{5}}}(v^+)^2+\int\limits_{-(\frac{3}{5})^\alpha}^{-(\frac{2}{5})^\alpha}\int\limits_{B_\frac{4}{5}}\int\limits_{B_\frac{4}{5}}G(x,-t)\frac{((v^+)(x,t)-(v^+)(y,t))^2}{|x-y|^{n+\alpha}}dydxdt\bigg). 
\end{equation}
Now, we multiply the left-hand side of \eqref{hineq8} by the constant $C(n.\alpha,\Lambda)$ of the right-hand side and we add this term to \eqref{hineq8} to obtain 
$$\sup_{Q_\frac{1}{5}}\ (v^+)^2 +\int\limits_{-(\frac{1}{5})^\alpha}^0\int\limits_{B_\frac{1}{5}}\int\limits_{B_\frac{1}{5}}G(x,-t)\frac{((v^+)(x,t)-(v^+)(y,t))^2}{|x-y|^{n+\alpha}}dydxdt\ \ \ \ \ \ \ \ \ \ \ \ \ \ \ \ \ \ \ \ \ \ \ \ \ \ \ \ \ $$ 
\begin{equation}\label{hineq9}\ \ \ \ \ \ \ \ \ \ \ \ \ \ \ \ \ \ \ \ \ \ \ \ \ \ \leq \frac{C(n,\alpha,\Lambda)}{1+C(n,\alpha,\Lambda)}\bigg(\sup_{Q_1\setminus Q_{\frac{1}{5}}}(v^+)^2+\int\limits_{-1}^0\int\limits_{B_1}\int\limits_{B_1}G(x,-t)\frac{((v^+)(x,t)-(v^+)(y,t))^2}{|x-y|^{n+\alpha}}dydxdt\bigg) 
\end{equation}
where in the integral of the right-hand side we have replaced the domain of integration by a larger one.

By setting $$\omega(\rho):=\sup_{Q_\rho} (v^+)^2 +\int\limits_{-(\rho)^\alpha}^0\int\limits_{B_\rho}\int\limits_{B_\rho}G(x,-t)\frac{((v^+)(x,t)-(v^+)(y,t))^2}{|x-y|^{n+\alpha}}dydxdt$$ we write \eqref{hineq9} as
\begin{equation}\label{hineq10}
\omega (\frac{1}{5})\leq\lambda\omega(1)+M
\end{equation}
where $\lambda:=\frac{C(n,\alpha,\Lambda)}{1+C(n,\alpha,\Lambda)}$. Iteration of \eqref{hineq10} implies that there exists a $\gamma=\gamma(\lambda)\in(0,1)$ and a constant $C=(n,\alpha,\Lambda,\|{u_t}\|_\infty,\|f_t\|)$ such that 
$$\omega(\rho)\leq C\rho^\gamma$$ for every $0\leq\rho\leq\frac{r_0}{5}$ where  $r_0$ is as in Definition~\ref{density}. Replacing $v^+$ by $v^-$ in the above calculations yields the same estimate, and this implies the H\"{o}lder continuity of $u_t$.
\end{proof}

\bibliographystyle{plain}   
\bibliography{biblio}             
\index{Bibliography@\emph{Bibliography}}%

\vspace{2em}

\begin{tabular}{l}
Ioannis Athanasopoulos\\ University of Crete \\ Department of Mathematics  \\ 71409 \\
Heraklion, Crete GREECE
\\ {\small \tt athan@uoc.gr}
\end{tabular}
\begin{tabular}{l}
Luis Caffarelli\\ University of Texas \\ Department of Mathematics  \\ TX 78712\\
Austin, USA
\\ {\small \tt caffarel@math.utexas.edu}
\end{tabular}
\begin{tabular}{lr}
Emmanouil Milakis\\ University of Cyprus \\ Department of Mathematics \& Statistics \\ P.O. Box 20537\\
Nicosia, CY- 1678 CYPRUS
\\ {\small \tt emilakis@ucy.ac.cy}
\end{tabular}

\end{document}